\newcounter{CounterTodo}
\newtheorem{theorem}{Theorem}[section]
\newtheorem{proposition}[theorem]{Proposition}
\newtheorem{lemma}[theorem]{Lemma}
\newtheorem{corollary}[theorem]{Corollary}
\newcommand{\exclusive}[1]{\overset{\leftarrow}{#1}}
\newcommand{\shared}[1]{\overset{\rightarrow}{#1}}
\begin{document}

\onehalfspace

\title{Computing the hull and interval numbers \\ in the weakly toll convexity}

\author[1]{Mitre C. Dourado\thanks{Partially supported by CNPq, Brazil, Grant number 305404/2020-2 and FAPERJ, Brazil, Grant number E-26/211.753/2021. E-mail: mitre@ic.ufrj.br.}}
\author[2]{Marisa Gutierrez\thanks{E-mail: marisa@mate.unlp.edu.ar.}}
\author[3]{\\ Fábio Protti\thanks{Partially supported by Conselho Nacional de Desenvolvimento Cient\'{\i}fico e Tecnol\'{o}gico (304117/2019-6) and FAPERJ (201.083/2021), Brazil. E-mail: fabio@ic.uff.br.}}
\author[2]{Silvia Tondato\thanks{E-mail: tondato@mate.unlp.edu.ar.}}

\affil[1]{Instituto de Computação, Universidade Federal do Rio de Janeiro, Brazil.}
\affil[2]{Facultad de Ciencias Exactas, Universidad Nacional de La Plata, Argentina.}
\affil[3]{Instituto de Computa\c c\~ao, Universidade Federal Fluminense, Brazil.}

\maketitle

\begin{abstract}
A walk $u_0u_1 \ldots u_{k-1}u_k$ of a graph $G$ is a \textit{weakly toll walk} if $u_0u_k \not\in E(G)$, $u_0u_i \in E(G)$ implies $u_i = u_1$, and $u_ju_k\in E(G)$ implies $u_j=u_{k-1}$. The {\em weakly toll interval} of a set $S \subseteq V(G)$, denoted by $I(S)$, is formed by $S$ and the vertices belonging to some weakly toll walk between two vertices of $S$. Set $S$ is {\it weakly toll convex} if $I(S) = S$.
The {\em weakly toll convex hull} of $S$, denote by $H(S)$, is the minimum weakly toll convex set containing $S$. The {\em weakly toll interval number} of $G$ is the minimum cardinality of a set $S \subseteq V(G)$ such that $I(S) = V(G)$; and the {\em weakly toll hull number} of $G$ is the minimum cardinality of a set $S \subseteq V(G)$ such that $H(S) = V(G)$. In this work, we show how to compute the weakly toll interval and the weakly toll hull numbers of a graph in polynomial time. In contrast, we show that determining the weakly toll convexity number of a graph $G$ (the size of a maximum weakly toll convex set distinct from $V(G)$) is \NP-hard. 

\medskip

\noindent {\bf Keywords:} Convex geometry; Convexity; Convex hull; Proper interval graph; Weakly toll walk
	
\end{abstract}

\section{Introduction} \label{sec:int}

A family ${\cal C}$ of subsets of a finite set $X$ is a {\em convexity on $X$} if $\varnothing, X \in {\cal C}$ and ${\cal C}$ is closed under intersections~\cite{van-de-vel}. The members of $\cal{C}$ are called \textit{convex sets}. A \textit{graph convexity} is a convexity for which $X=V(G)$ for a graph $G$. There are many examples in the literature where a graph convexity is defined over a path system; the {\em monophonic convexity}~\cite{dourado-et-al,duchet} consists of all the monophonically convex sets of $V(G)$ (a set $S$ is {\em monophonically convex} if and only if every {\em induced} path between two vertices of $S$ lies entirely in the subgraph induced by $S$). Likewise, the {\em geodesic convexity}~\cite{pelayo}, the {\em $m^3$-convexity}~\cite{dragan-et-al}, the {\em toll convexity}~\cite{alcon-et-al}, and the {\em weakly toll convexity}~\cite{gutierrez-tondato} are defined over shortest paths, induced paths of length at least three, tolled walks, and weakly toll walks, respectively.

In this work, we consider the weakly toll convexity, a variation of the toll convexity. 
Given distinct and nonadjacent vertices $u$ and $v$ of a graph $G$, a {\em weakly toll $(u,v)$-walk} is a walk $u_0u_1\ldots u_{k-1}u_k$ such that $u=u_0$, $v=u_k$, and the following conditions hold:

\begin{itemize}
	\item $u_0u_i\in E(G)$ implies $u_i=u_1$;
	\item $u_iu_k\in E(G)$ implies $u_i=u_{k-1}$.
\end{itemize}

\noindent In other words, $u_0$ (resp., $u_k$) has only one neighbor in the walk: vertex $u_1$ (resp., $u_{k-1}$), which can appear more than once in the walk.

The concept of weakly toll walk is a relaxation of the concept of {\em tolled walk}~\cite{alcon-et-al}, defined to capture the structure of the convex geometry associated with an interval graph (for the concept of {\em convex geometry}, we refer the reader to~\cite{farber-jamison}). Analogously, weakly toll walks are used to characterize proper interval graphs as convex geometries (see~\cite{gutierrez-tondato}). In addition to interval and proper interval graphs, other important classes of graphs have been characterized as convex geometries. Chordal and Ptolemaic graphs are convex geometries with respect to the monophonic and the geodesic convexities, respectively~\cite{farber-jamison}; and weak polarizable graphs~\cite{olariu} are convex geometries with respect to the $m^3$-convexity~\cite{dragan-et-al}. Other classes of graphs that have been characterized as convex geometries are forests, cographs, bipartite graphs, and planar graphs~\cite{araujo-sampaio}. 

The {\em weakly toll interval} of a set $S \subseteq V(G)$, denoted by $I(S)$, is formed by $S$ and the vertices belonging to some weakly toll walk between two vertices of $S$. We say that $S$ is a {\em weakly toll interval set} if $I(S) = V(G)$, and that $S$ is {\it weakly toll convex} if $S = I(S)$. The {\em weakly toll convex hull} of $S$, denoted by $H(S)$, is the minimum weakly toll convex set containing $S$. We say that $S$ is a {\em weakly toll hull set} if $H(S) = V(G)$. The {\em weakly toll interval number} of $G$ is the minimum cardinality of a weakly toll interval set of $G$; the {\em weakly toll hull number} of $G$ is the minimum cardinality of a weakly toll hull set of $G$; and the {\em weakly toll convexity number} of $G$, denoted by ${\mathit wtc}(G)$, is the size of a maximum weakly toll convex set of $G$ distinct from $V(G)$.

In~\cite{gutierrez-tondato}, some invariants associated with the weakly toll convexity are studied; namely, the weakly interval number and the weakly hull number of trees and interval graphs are determined. In this work, we study the weakly toll hull, weakly toll interval, and weakly toll convexity numbers of general graphs. We remark that there are many studies on the invariants mentioned above, considering other types of graph convexities. The interested reader is referred, for instance, to~\cite{dourado-et-al,pelayo}.
We draw attention to the fact that in many cases these three parameters are \NP-complete even for special graph classes, as is the case for the hull number for partial cube graphs~\cite{2016AK} and the interval number for chordal bipartite graphs~\cite{2010DPSR} in the geodesic convexity. Regarding to the toll convexity, we cite~\cite{2022Dourado,2022Dravec}, where the hull and interval numbers have been investigated.

The remainder of this work is organized as follows. Section 2 contains all the necessary background. Section 3 presents some facts and complexity aspects on weakly toll convex sets and extreme vertices. In Section 4, we prove that deciding if the weakly toll convexity number of a graph $G$ is at least $k$ is \NP-complete, even for prime graphs ($G$ is a {\em prime graph} if it contains no clique separator). In Section 4, we show how to determine the weakly toll hull number of a graph; as in Section 3, concepts on decompositions by clique separators are used as a tool for this purpose. Section 5 is devoted to the determination of the weakly toll interval number. Finally, Section 6 contains our conclusions.

\section{Preliminaries}

We consider finite, simple and undirected graphs. Let $G$ be a graph. For a set $S\subseteq V(G)$, we denote by $G[S]$ the subgraph of $G$ induced by $S$. A vertex $v \in V(G)$ is a {\em simplicial vertex} if $N(v)$ is a clique. We say that $u$ and $v$ are ({\em true}) {\em twins} if $N[u] = N[v]$.
Denote by $T(u)$ the set of vertices that are twins to $u$, and $T[u] = T(u) \cup \{u\}$.
Note that the vertices of $G$ can be partitioned into sets $T_1, \ldots, T_k$ such that for $i \in \{1, \ldots, k\}$, $T_i = T[u]$ for some $u \in V(G)$. We call each set of this partition a {\em class of twins} of $G$ and recall that this partition can be obtained in linear time using modular decomposition~\cite{CoHa1994,McSp1994}.
Given a set $S$, we denote by $\hat{S}$ the subset of $S$ containing exactly one vertex of each class of twins having vertices in $S$.

A path (resp., walk) between two vertices $u,v \in V(G)$ is called a {\em $(u,v)$-path} (resp., {\em $(u,v)$-walk}). Let $P=u_0u_1\ldots u_{k-1}u_k$ be a $(u_0,u_k)$-path.
Given a set $S \subseteq V(G)$, a path from $u$ to $S$ is a path from $u$ to any vertex of $S$.

A vertex $x$ of a weakly toll convex set $S\subseteq V(G)$ is called a {\em weakly toll extreme vertex} of $S$ if $S\setminus\{x\}$ is also a weakly toll convex set. The set of weakly toll extreme vertices of $S$ is denoted by ${\mathit ext}(S)$. The set of weakly toll extreme vertices of $V(G)$ is denoted by ${\mathit ext}(G)$. In general, if ${\cal C}$ is a convexity on $V(G)$ and $S\in {\cal C}$, a vertex $x\in S$ is an extreme vertex of $S$ if $S\backslash\{x\}$ is also convex. The convexity ${\cal C}$ is a \textit{convex geometry} if it satisfies the \textit{Minkowski-Krein-Milman} property~\cite{krein-milman}: {\em Every convex set is the convex hull of its extreme vertices}. 

For convenience, when using the interval and hull operators for a given set, sometimes it will be useful to write the vertices of the set. For instance, we can write $I(u,v)$ standing for $I(\{u,v\})$.

We need some additional following definitions, that can be found in~\cite{leimer1993}. We say that $S\subset V(G)$ {\em separates} $u,v\in V(G)$ if $u$ and $v$ are connected by a path in $G$ but not in $G-S$. In addition, $S$ is a {\em separator} of $G$ if $S$ separates some pair of vertices of $G$, and $S$ is a {\em clique separator} of $G$ if $S$ is a separator of $G$ and a clique. A graph $G$ is {\em reducible} if it contains a clique separator, otherwise it is {\em prime}. A {\em maximal prime subgraph} of $G$, or {\em mp-subgraph} of $G$, is a maximal induced subgraph of $G$ that is prime. Given an mp-subgraph $M$ of $G$, we denote by $\shared{M}$ the set formed by the vertices of $V(M)$ that belong to at least two mp-subgraphs of $G$; and $\exclusive{M} = V(M) \setminus \shared{M}$.

An mp-subgraph $M_i$ of a reducible graph $G$ is {\em extremal} if there is an mp-subgraph $M_j\neq M_i$ for which the following property is valid: for every mp-subgraph $M_k \neq M_i$, it holds that $V(M_i)\cap V(M_k) \subseteq V(M_i)\cap V(M_j)$. In this case, note that $\shared{M_i} = V(M_i)\cap V(M_j)$.

\begin{proposition}
Let $G$ be a graph with order $n$, size $m$ and maximum degree $\Delta$.
The following hold for distinct vertices $u,v$ and $w$ of $G$ if $uw \notin E(G)$.

\begin{enumerate}
\item Then, $v$ lies in a weakly toll $(u,w)$-walk if and only if there exist $v_u \in N(u)$ and $v_w \in N(w)$ such that the graph $G-S(v_u,v_w)$ contains a connected component $C$ with $\{v_u,v_w,v\}\subseteq V(C)$, where
$$S(v_u,v_w) = (N[u] \setminus\{v_u\}) \cup (N[w] \setminus\{v_w\}).$$ \label{prop1}

\item Let $u,v,w$ be three distinct vertices of a graph $G$ such that $uv\notin E(G)$. Then, deciding whether $v$ lies in a weakly toll $(u,w)$-walk can be done in $O(\Delta^2(m+n))$ time. \label{prop2}
\end{enumerate}
\end{proposition}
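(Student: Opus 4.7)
My plan is to prove the two parts in sequence, using Part~\ref{prop1} as the correctness backbone for Part~\ref{prop2}.

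For the forward direction of Part~\ref{prop1}, I would take a weakly toll $(u,w)$-walk $W=u_0u_1\ldots u_k$ containing $v$ and set $v_u=u_1$, $v_w=u_{k-1}$. The natural candidate for the path witnessing connectivity in $G-S(v_u,v_w)$ is the sub-walk $W^\ast=u_1u_2\ldots u_{k-1}$. I need to show that no $u_i$ with $1\le i\le k-1$ lies in $S(v_u,v_w)$. The easy case is $u_i\in N(u)\setminus\{v_u\}$, which is immediately ruled out by the defining walk property; symmetrically for $N(w)\setminus\{v_w\}$. The subtle case, which I expect to be the main obstacle, is that $u$ itself (or $w$) might reappear as an internal vertex: since $v_u\neq u$, the vertex $u$ belongs to $N[u]\setminus\{v_u\}\subseteq S(v_u,v_w)$. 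I would handle this by a shortcutting argument: if $u_j=u$ for some $2\le j\le k-2$, then $u_{j-1}$ and $u_{j+1}$ are both neighbors of $u_0=u$ in the walk, so the walk property forces $u_{j-1}=u_{j+1}=v_u$; hence we may replace the block $v_u\,u\,v_u$ by $v_u$ without breaking walk-validity or the weakly-toll conditions, and without removing $v$ (since $v\neq u$). The cases $j=1,k-1$ are impossible because $v_u\neq u$ and $uw\notin E(G)$. An analogous shortcutting eliminates internal occurrences of $w$. After these reductions, $W^\ast$ is a walk in $G-S(v_u,v_w)$ connecting $v_u$, $v$, and $v_w$.

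For the reverse direction, assume a component $C$ of $G-S(v_u,v_w)$ contains $\{v_u,v_w,v\}$. Take a $(v_u,v)$-walk and a $(v,v_w)$-walk in $C$, concatenate them into a $(v_u,v_w)$-walk $W'$ using only vertices of $V(G)\setminus S(v_u,v_w)$, and form $W=u\cdot W'\cdot w$. Every internal vertex $u_i$ of $W$ lies outside $S(v_u,v_w)$; thus if $uu_i\in E(G)$ then $u_i\in N(u)$ but $u_i\notin N[u]\setminus\{v_u\}$, forcing $u_i=v_u=u_1$ (using $v_u\neq u$). The symmetric argument handles the $w$-side, and $u_0u_k=uw\notin E(G)$ by assumption. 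Hence $W$ is a weakly toll $(u,w)$-walk containing $v$.

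For Part~\ref{prop2}, the algorithm is immediate from Part~\ref{prop1}: iterate over every pair $(v_u,v_w)\in N(u)\times N(w)$, compute $S(v_u,v_w)$, run a BFS/DFS in $G-S(v_u,v_w)$ starting at $v_u$, and accept iff both $v_w$ and $v$ are visited (in particular, iff $v_u$ was not itself deleted). Each pass takes $O(m+n)$ time, and there are at most $\Delta^2$ pairs, giving the claimed $O(\Delta^2(m+n))$ bound. The only care needed is to observe that building $S(v_u,v_w)$ and the subgraph $G-S(v_u,v_w)$ costs $O(m+n)$, which is absorbed into the BFS/DFS cost.
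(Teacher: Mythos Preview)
Your proposal is correct and follows essentially the same approach as the paper: in both directions you pick $v_u=u_1$, $v_w=u_{k-1}$ and pass between the internal sub-walk and a component of $G-S(v_u,v_w)$, and for Part~\ref{prop2} you iterate over the at most $\Delta^2$ pairs and run a linear-time connectivity check. The one place you are more careful than the paper is the forward direction: the paper simply asserts that the sub-walk $u_1\ldots u_{k-1}$ avoids all of $N[u]\setminus\{v_u\}$ and $N[w]\setminus\{v_w\}$, whereas you correctly note that the weakly-toll condition does not by itself forbid $u$ (or $w$) from reappearing as an internal vertex, and you dispose of this via the $v_u\,u\,v_u\mapsto v_u$ shortcutting. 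This is a genuine (if minor) detail that the paper elides, and your treatment of it is sound.
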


\begin{proof}
$(\ref{prop1})$ Suppose that $v$ lies in a weakly toll walk $u,v_1,v_2,\ldots,v_{k-1},v_k,w$. Let $v_u = v_1$ and $v_w = v_k$. Then $v_u\in N(u)$ and $v_w \in N(w)$. In addition, $v_1,v_2,\ldots,v_{k-1},v_k$ is a walk that avoids all vertices in $N[u]$ (except $v_u=v_1$) and all vertices in $N[v]$ (except $v_w=v_k$). This implies that $v_u$, $v_w$ and $v$ lie in a same connected component of $G-S(v_u,v_w)$.

Conversely, suppose that there is a pair of (not necessarily distinct) vertices $v_u\in N(u)$ and $v_w\in N(w)$ such that $v_u$, $v_w$ and $v$ lie in a same connected component $C$ of $G-S(v_u,v_w)$. Then, there is a walk $W=v_1,v_2,\ldots,v_{k-1},v_k$ in $C$ such that $v_1=v_u$, $v_k=v_w$, and $W$ contains $v$. (Possibly, $v_u$ and $v_w$ occur more than once in $W$.) In addition, $W$ avoids all vertices in $N[u]$ (except $v_u=v_1$) and all vertices in $N[v]$ (except $v_w=v_k$). This implies that $W'=u,v_1,v_2,\ldots,v_{k-1},v_k,w$ is a weakly toll walk containing $v$.

\bigskip\noindent $(\ref{prop2})$ By Proposition~\ref{prop1}, deciding whether $v$ lies in a weakly toll $(u,w)$-walk amounts to checking whether there is pair $v_u, v_w$ of neighbors of $u$ and $w$, respectively, such that $v_u$, $v_w$ and $v$ lie in a same connected component $C$ of $G-S(v_u,v_w)$. Since there are at most $\Delta^2$ possible pairs $v_u,v_w$, and for each pair testing whether $v_u,v_w,v$ lie in a component of $G-S(v_u,v_w)$ can be done in $O(n+m)$ time, the result does follow.
\end{proof}

From the above proposition and the definitions given in the previous section, we can easily derive the following fact.

\begin{corollary}~\label{cor:complexityIH}
Given a set of vertices $S$ of a graph $G$ with order $n$, size $m$ and maximum degree $\Delta$, $I(S)$ and $H(S)$ can be computed in $O(\Delta^2 |S|^2 (n - |S|)(n+m))$ and $O(\Delta^2 n^4(n+m))$ steps, respectively.
\end{corollary}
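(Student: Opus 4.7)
The plan is to derive both bounds as direct consequences of Proposition~\ref{prop2}. To compute $I(S)$, I would iterate, for every candidate vertex $v\in V(G)\setminus S$, over every unordered pair $\{u,w\}\subseteq S$ with $uw\notin E(G)$, and invoke Proposition~\ref{prop2} to decide whether $v$ lies on some weakly toll $(u,w)$-walk; any vertex passing at least one such test is added to $I(S)$. Each test costs $O(\Delta^2(n+m))$ by Proposition~\ref{prop2}, there are $n-|S|$ choices of $v$, and at most $\binom{|S|}{2}=O(|S|^2)$ admissible pairs $\{u,w\}$, so the total running time multiplies out to $O(\Delta^2 |S|^2 (n-|S|)(n+m))$, exactly as claimed.

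For $H(S)$, I would iterate the interval operator: set $S_0=S$ and $S_{i+1}=I(S_i)$, stopping as soon as $S_{i+1}=S_i$, in which case $S_i=H(S)$ by the very definition of the convex hull as the least fixed point of $I$ containing $S$. Since $S_{i+1}\supseteq S_i$ holds automatically and termination is detected precisely when equality occurs, the sizes strictly increase along the chain until stabilization, so there are at most $n$ iterations. At any iteration we have $|S_i|\le n$ and $n-|S_i|\le n$, so the cost of one application of $I$ is, by the first part, at most $O(\Delta^2\cdot n^2\cdot n\cdot (n+m))=O(\Delta^2 n^3(n+m))$. Multiplying by the $O(n)$ iterations yields the announced bound $O(\Delta^2 n^4(n+m))$.

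The argument is essentially bookkeeping on top of Proposition~\ref{prop2}; there is no substantial obstacle. The only point that deserves explicit mention is the termination count for the hull iteration, where one needs to observe that $|S_i|$ strictly grows until a fixed point is reached, bounding the number of passes by $n$. A mildly sharper estimate could be obtained by summing $|S_i|^2(n-|S_i|)$ telescopically, but this refinement is unnecessary for the stated corollary.
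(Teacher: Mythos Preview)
Your proposal is correct and matches the paper's intended derivation: the paper does not spell out a proof at all, merely stating that the corollary follows directly from Proposition~\ref{prop2} and the definitions, which is exactly the bookkeeping you carry out. Your explicit accounting of the $O(|S|^2)$ pairs, the $n-|S|$ candidate vertices, and the at most $n$ iterations of $I$ for the hull is precisely the argument implicit in the paper's one-line justification.
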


\section{Weakly toll convexity number} \label{sec:convnumber}

In this section, we deal with the problem of determining ${\mathit wtc}(G)$ for a graph $G$. We start by showing that the {\sc Clique} Problem is \NP-complete even when restricted to prime graphs.

\begin{theorem}\label{thm:clique-prime}
	Let $G$ be a prime graph and $k$ a positive integer. Then the problem of deciding whether $G$ contains a clique of size at least $k$ is \NP-complete.
\end{theorem}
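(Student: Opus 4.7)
The plan is a reduction from the standard \textsc{Clique} problem, which is well known to be \NP-complete. Membership in \NP\ is immediate, so the task is to show \NP-hardness when the input graph is restricted to be prime.

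Given an instance $(G,k)$ of \textsc{Clique}, I would first dispose of the trivial case in which $G$ is a complete graph (then the answer is simply $|V(G)|\geq k$, and moreover a complete graph is already prime). Otherwise, fix two non-adjacent vertices of $G$ and construct $G'$ from $G$ by adding two new vertices $x,y$ such that $x$ and $y$ are each adjacent to every vertex of $V(G)$ but $xy\notin E(G')$. Set $k'=k+1$. The reduction is clearly polynomial, so it remains to verify two things: that $G'$ is prime, and that $\omega(G')=\omega(G)+1$.

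For the clique correspondence, note that since $xy\notin E(G')$, no clique of $G'$ contains both $x$ and $y$. Any clique containing $x$ (resp.\ $y$) is of the form $\{x\}\cup K$ (resp.\ $\{y\}\cup K$) with $K$ a clique of $G$, because $N_{G'}(x)=N_{G'}(y)=V(G)$. Hence $\omega(G')=\omega(G)+1$, so $G$ has a clique of size $\geq k$ iff $G'$ has a clique of size $\geq k+1$.

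For primality, suppose for contradiction that $S\subseteq V(G')$ is a clique separator of $G'$. If $\{x,y\}\subseteq S$ then $S$ is not a clique since $xy\notin E(G')$. If exactly one of $x,y$, say $y$, lies outside $S$, then every vertex of $V(G)\setminus S$ is adjacent to $y$ in $G'-S$, so either $V(G)\subseteq S$ (and $G'-S$ has at most one vertex) or $G'-S$ is connected; in either case $S$ separates nothing. If $\{x,y\}\cap S=\varnothing$ and $V(G)\setminus S\neq\varnothing$, then any vertex of $V(G)\setminus S$ witnesses that $x$ and $y$ lie in the same component of $G'-S$, which is therefore connected. The remaining subcase is $S=V(G)$; but then $S$ being a clique forces $G$ to be complete, contradicting the case we are in. Thus $G'$ has no clique separator and is prime, completing the reduction. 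The only delicate point is this last case analysis, but the universality of $x$ and $y$ makes each branch essentially immediate.
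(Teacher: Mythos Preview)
Your reduction is correct. The case analysis for primality of $G'$ is sound: in every branch, universality of the surviving new vertex forces $G'-S$ to be connected (the one minor phrasing gap---going from ``$x$ and $y$ share a component'' to ``$G'-S$ is connected''---is filled by noting every vertex of $V(G)\setminus S$ is adjacent to $x$). Handling the complete case separately is necessary, since in that case $V(G)$ itself would be a clique separator of your $G'$. The sentence ``fix two non-adjacent vertices of $G$'' is never used and can be dropped.

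The paper takes a different construction: instead of adding two universal vertices, it adds a new degree-two vertex $x_{uv}$ adjacent to $u$ and $v$ for \emph{every} non-edge $uv$ of $G$, and keeps $k'=k$. Primality then follows because any putative clique separator would separate two original vertices $a',b'$, yet the detour $a'\,x_{a'b'}\,b'$ survives. Your approach is leaner---only two new vertices and a short case analysis exploiting universality---at the cost of shifting the threshold to $k+1$ and needing the separate treatment of complete $G$. The paper's construction avoids both of those wrinkles but introduces $\Theta(n^2)$ extra vertices and a slightly more indirect primality argument. Either works equally well for the complexity claim.
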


\begin{proof}
The problem is clearly in \NP. The hardness proof is a reduction from the {\sc Clique} Problem~\cite{garey-johnson}. Let $(G,k)$ be an input of the {\sc Clique} Problem. Set $k'=k$ and construct a graph $G'$ as follows:
	$$V(G') = V(G)\cup \{x_{uv} \mid u,v\in V(G), uv \notin E(G)\}$$
	$$\text{and}$$
	$$E(G')=E(G)\cup\{u\,x_{uv}, v\,x_{uv}\mid u,v\in V(G), uv\notin E(G)\}.$$
	
We show that $G'$ is a prime graph. Suppose not, and let $C$ be a clique of $G'$ that separates $a,b\in V(G')$. If $a\notin V(G)$ then $a=x_{uv}$ for some nonadjacent vertices $u,v\in V(G)$, and there is a connected component $G_a$ of $G'-C$ containing $a$ and a neighbor $a'\in\{u,v\}$ of $a$, because at least one of $u,v$ is not in $C$. Similarly, if $b\notin V(G)$, then there is a connected component $G_b\neq G_a$ of $G'-C$ containing a neighbor $b'\in V(G)$ of $b$. Thus, in any case, $C$ separates two vertices $a',b'\in V(G)$. In addition, $a'b'\notin E(G)$, and this implies that $x_{a'b'}\in V(G')$. Since $x_{a'b'}$ is adjacent only to $a'$ and $b'$, it follows that $x_{a'b'}\notin C$ and $G'-C$ contains the path $a'\,x_{a'b'}\,b'$, a contradiction. Therefore, $G'$ is a prime graph.
	
To conclude the proof, we have that $G'$ contains a clique of size at least $k'$ if and only if $G$ contains a clique of size at least $k$ because we can assume that $|V(G)| \ge 2$ and the maximum clique containing a vertex of $V(G') \setminus V(G)$ has 2 vertices.
\end{proof}

The next theorem shows that the problem of deciding if the weakly toll convexity number of a graph $G$ is at least $k$ is \NP-complete even when $G$ is a prime graph.

\begin{corollary} \label{cor:convnumber}
Let $G$ be a prime graph and $k$ a positive integer. Then, the problem of deciding whether ${\mathit wtc}(G)\geq k$ is \NP-complete.
\end{corollary}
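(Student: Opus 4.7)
The problem is in \NP: a certificate is a set $S\subseteq V(G)$ with $|S|\geq k$ and $S\neq V(G)$, and the condition $I(S)=S$ can be verified in polynomial time by Corollary~\ref{cor:complexityIH}. For the hardness, I would reduce from Clique on prime graphs, shown \NP-complete by Theorem~\ref{thm:clique-prime}. Given an instance $(F,k)$ of that problem, I may assume $k\geq 3$ and that $F$ is a non-complete connected prime graph (all other cases are polynomial). I reuse the construction from the proof of Theorem~\ref{thm:clique-prime}: form the graph $G$ by adding to $F$, for each non-edge $uv$ of $F$, a new vertex $x_{uv}$ adjacent exactly to $u$ and $v$. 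By Theorem~\ref{thm:clique-prime}, $G$ is prime and $\omega(G)=\omega(F)$. Setting $k'=k$, the target is $\omega(F)\geq k$ if and only if ${\mathit wtc}(G)\geq k$.

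The forward direction is immediate: a clique $C$ of $F$ with $|C|\geq k$ is also a clique of $G$, so no weakly toll walk has both endpoints in $C$, and $I(C)=C$. Because $F$ is not complete, some $x_{uv}$ lies in $V(G)\setminus V(F)$, giving $C\subsetneq V(G)$ and ${\mathit wtc}(G)\geq k$.

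The backward direction rests on the following key lemma: \emph{for every pair of non-adjacent vertices $u,v\in V(G)$, the weakly toll hull $H(\{u,v\})$ equals $V(G)$.} Assuming the lemma, any weakly toll convex set $S\subsetneq V(G)$ cannot contain two non-adjacent vertices (otherwise $V(G)\subseteq H(\{u,v\})\subseteq S$), so $S$ is a clique of $G$ and $|S|\leq\omega(G)=\omega(F)$. Hence ${\mathit wtc}(G)\leq\omega(F)$, completing the biconditional.

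The main obstacle is the key lemma, which I would prove by iterating the interval operator $I$ and arguing by cases on the type of the pair $u,v$. When $u,v\in V(F)$ with $uv\notin E(F)$, the two-step walk $u,x_{uv},v$ forces $x_{uv}\in I(\{u,v\})$, and any shortest $u$-$v$-path in $F$ (being induced) is also a weakly toll walk in $G$, forcing its internal vertices into $I(\{u,v\})$. When $u\in V(F)$ and $v=x_{ab}$, or when both $u,v$ are gadget vertices, the short walks $u,a,x_{ab}$, $u,b,x_{ab}$, and their gadget-bridge variants $u,x_{ua},a,x_{ab}$ inject at least one of $a,b$ into the hull, reducing to the first case. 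Once the hull contains two non-adjacent original vertices $a,b\in V(F)$, the walk $a,x_{ab},b$ captures $x_{ab}$; together with the connectivity of $F$, the iteration propagates through all of $V(F)$ and, by the same two-step trick, finally captures every remaining gadget $x_{cd}$. The delicate step is to show this propagation never stalls, which rests on the ubiquity of the gadget shortcuts in $G$.
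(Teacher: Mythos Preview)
Your approach is correct in spirit but takes a long detour compared to the paper. The paper's proof is essentially two lines: by Corollary~\ref{cor:prime-not-clique-1} (which in turn follows from the monophonic result of Lemma~\ref{the:mhp}), in \emph{any} prime non-complete graph every pair of non-adjacent vertices is a weakly toll hull set, hence every proper weakly toll convex set is a clique and ${\mathit wtc}(G)=\omega(G)$. The reduction from {\sc Clique} on prime graphs (Theorem~\ref{thm:clique-prime}) is therefore the identity map---no construction is needed at all.

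You instead re-apply the gadget construction from Theorem~\ref{thm:clique-prime} to a graph $F$ that is \emph{already} prime, and then set out to prove by hand, for the specific graph $G$ you built, the very statement that Corollary~\ref{cor:prime-not-clique-1} gives for free. Your ``key lemma'' is precisely that corollary specialised to $G$. The gadget structure does make a direct proof feasible (in fact, for non-adjacent $a,b\in V(F)$ one has $V(F)\subseteq I_G(a,b)$ in a single step: for any $c\in V(F)$ the walk $a,x_{ac},c,x_{cb},b$, with a gadget replaced by a direct edge whenever that edge exists, is weakly toll), but your write-up leaves the propagation argument and the gadget--gadget case as acknowledged loose ends. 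So nothing you wrote is wrong, but you are redoing work that the paper obtains from a known monophonic-convexity lemma, and the part you flag as ``delicate'' is exactly the part the paper avoids altogether.
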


\begin{proof}
Given a set $S\subseteq V(G)$ with $|S|\geq k$, by Corollary~\ref{cor:complexityIH}, we can check if $S$ is weakly toll convex in polynomial time. Thus the problem is in \NP.

By Corollary~\ref{cor:prime-not-clique-1}, if a graph $G$ is prime but not complete, then every weakly toll convex set different of $V(G)$ is a clique. Therefore, the hardness proof follows from the fact that the {\sc Clique} problem restricted to prime graphs is \NP-complete (Theorem~\ref{thm:clique-prime}).
\end{proof}

\section{Weakly toll interval number} \label{sec:interval}

In this section, we show how to compute ${\mathit wtn}(G)$ for any graph $G$ in polynomial time.

\begin{theorem}~\label{thm:wtn}
Let $G$ be a non-complete connected graph and let $u,v \in V(G)$ satisfying $|I(u,v)| \ge |I(w,z)|$ for any $w,z \in V(G)$. The following sentences hold.

\begin{enumerate}[$(i)$]
	\item If $u,v \notin ext(G)$, then $wtn(G) \le 8$. \label{ite:noextreme}

	\item If $u \in ext(G)$ and $v \notin ext(G)$, then $|T[u]| + 1 \le wtn(G) \le |T[u]| + 5$. \label{ite:1extreme}

	\item If $u,v \in ext(G)$, then $|T[u] \cup T[v]| \le wtn(G) \le |T[u] \cup T[v]| + 2$. \label{ite:2extremes}

	\item If $w \in ext(G)$, then $w \in T[u] \cup T[v]$. \label{ite:2classes}
\end{enumerate}
	
\end{theorem}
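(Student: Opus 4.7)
My plan is to establish the items in the order (a)~a preliminary lemma on twins, (b)~the lower bounds in (ii) and (iii), (c)~item~(iv), and (d)~the upper bounds.

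\textbf{Preliminary lemma and lower bounds.} I first claim that $T[u]\subseteq ext(G)$ whenever $u\in ext(G)$. Given $u'\in T[u]\setminus\{u\}$ internal on a weakly toll $(a,b)$-walk with $a,b\ne u'$, the weakly toll endpoint condition combined with $uu'\in E(G)$ forces $a\ne u$ and $b\ne u$ (otherwise $u'$ would be pinned at walk-position~$1$ rather than internal). Using $N[u]=N[u']$, I would replace the internal occurrence of $u'$ by $u$, shortcutting the local oscillation $u,u',u$ in the cases where $u$ already sits at position $i\pm 1$, to obtain a weakly toll walk internally through $u$ between two non-$u$ vertices and contradict $u\in ext(G)$. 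Since every interval set $S$ must contain $ext(G)$ (for otherwise an extreme $w\notin S$ would satisfy $w\in I(S)\subseteq I(V(G)\setminus\{w\})=V(G)\setminus\{w\}$), the lower bound $wtn(G)\ge|T[u]\cup T[v]|$ in (iii) follows. In (ii), the same argument forces $T[u]\subseteq S$; but $T[u]$ induces a clique, so $I(T[u])=T[u]\subsetneq V(G)$, and at least one additional vertex is required, giving $wtn(G)\ge|T[u]|+1$.

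\textbf{Item (iv).} I plan to argue by contradiction: suppose $w\in ext(G)\setminus(T[u]\cup T[v])$, so $N[w]\ne N[u]$ and $N[w]\ne N[v]$, with witness vertices exhibiting each discrepancy. Using the connected-component characterization of weakly toll walks from Section~2 (via $G-S(v_u,v_w)$), my plan is to show that one of $|I(u,w)|$ or $|I(v,w)|$ strictly exceeds $|I(u,v)|$: roughly, each vertex of $I(u,v)$ can be lifted to a vertex of, say, $I(u,w)$, while the neighborhood discrepancy supplies at least one additional vertex in $I(u,w)\setminus I(u,v)$, contradicting the maximality of $|I(u,v)|$.

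\textbf{Upper bounds and main obstacle.} My candidate interval sets are $T[u]\cup T[v]\cup\{p,q\}$ in case (iii); $T[u]\cup\{v,b_1,b_2,p,q\}$ in case (ii), with $b_1,b_2\ne v$ being the endpoints of a weakly toll walk through $v$ witnessing its non-extremality; and $\{u,v,a_1,a_2,b_1,b_2,p,q\}$ in case (i), with $a_i,b_i$ analogously witnessing non-extremality of $u,v$. The two helpers $p,q$ cover the residual $V(G)\setminus I(u,v)$, whose structure is constrained by the maximality of $|I(u,v)|$ to at most two ``branches'' each coverable by a single helper. The most delicate step is (iv): the lifting argument must produce a strict inequality in cardinality, requiring careful control over how the neighborhood-discrepancy witness becomes a genuinely new vertex in $I(u,w)\setminus I(u,v)$. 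A secondary difficulty is the residual-branch counting that pins down the constants $2$, $5$, and $8$, which relies on maximality as the sole structural handle on $V(G)\setminus I(u,v)$.
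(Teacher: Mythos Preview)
Your architecture for the lower bounds and for the upper bounds matches the paper's: the lower bounds come from $ext(G)\subseteq S$ for any interval set $S$, and the upper bounds come from analysing $V(G)\setminus I(u,v)$ via maximality, extracting two helper vertices (the paper's $r_u,r_v$) and, in cases (i)--(ii), adding witnesses of non-extremality. So far so good.

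The genuine gap is your plan for (iv). You propose a direct contradiction: assume $w\in ext(G)\setminus(T[u]\cup T[v])$ and show that one of $|I(u,w)|,|I(v,w)|$ strictly exceeds $|I(u,v)|$ by ``lifting'' every vertex of $I(u,v)$ into $I(u,w)$. But extremality of $w$ only says $w$ is never internal to a weakly toll walk between other vertices; it gives no handle on embedding $I(u,v)$ into $I(u,w)$, nor on producing an extra vertex there. I do not see how to make this lifting go through, and you yourself flag it as the most delicate step without supplying the mechanism. The paper sidesteps this entirely by reversing the order: it does the structural analysis of $V(G)\setminus I(u,v)$ \emph{first} and obtains (iv) as a corollary. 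Concretely, it shows (a)~every vertex outside $I(u,v)$ lies in $N(u)\cup N(v)$ (in your language, there are only two ``branches''), (b)~every vertex of $Z_u:=(N(u)\setminus I(u,v))\setminus T[u]$ has a neighbor in $I(u,v)\setminus N[u]$, and (c)~for any single such neighbor $r_u$ one has $Z_u\subset I(r_u,u,v)$. Symmetrically for $v$. Once (a)--(c) are in hand, every vertex outside $T[u]\cup T[v]$ is internal to a weakly toll walk among $\{u,v,r_u,r_v\}$, hence not extreme, which is exactly (iv). So (iv) is a free byproduct of the upper-bound machinery, not a separate lemma; reorganising your argument to prove the structural claims (a)--(c) first both closes the gap and avoids duplicated work.

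A secondary remark: your two-branch picture is correct in spirit, but the constants $2,5,8$ hinge on the non-obvious facts (a)--(c) above, each of which is a separate maximality argument in the paper. In particular, that a \emph{single} helper $r_u$ suffices for all of $Z_u$ (rather than one per vertex of $Z_u$) is what pins down the additive constant $2$ in (iii).
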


\begin{proof}
$(\ref{ite:noextreme})$ to~$(\ref{ite:2extremes})$
Since $G$ is not complete, the choice of the pair $\{u,v\}$ implies that $uv \notin E(G)$.
Note that exactly one of~$(\ref{ite:noextreme})$ to~$(\ref{ite:2extremes})$ holds for $G$ depending on how many of $u$ and $v$ are weakly toll extreme vertices. If $I(u,v) = V(G)$, then $wtn(G) = 2$, $|T[u]| = 1$ and $|T[v]| = 1$, which means that the result follows for any of the items~$(\ref{ite:noextreme})$ to~$(\ref{ite:2extremes})$ that fits to $G$.
Hence, we can assume that $V(G) \setminus I(u,v) \ne \varnothing$. Denote by $Y$ the subset of $V(G) \setminus I(u,v)$ formed by the vertices containing neighbors in $I(u,v)$, and write $X = V(G) \setminus (I(u,v) \cup Y)$. Since $G$ is connected, there is $y \in Y$. See Figure~\ref{fig1}.

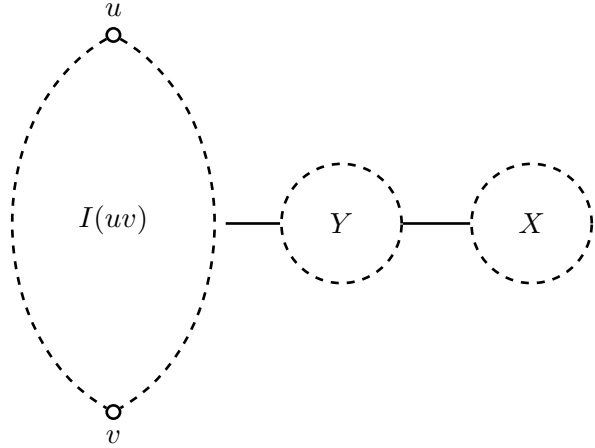
\begin{figure}[ht]
	\begin{center}

\begin{tikzpicture}[scale=1]

\pgfsetlinewidth{1pt}

\tikzset{vertex/.style={circle,  draw, minimum size=5pt, inner sep=0pt}}
\tikzset{texto/.style={circle,  minimum size=5pt, inner sep=0pt}}
\tikzset{set/.style={circle,  draw, minimum size=45pt, inner sep=5pt}}

\def\h{0}
\def\v{0}

\node [vertex] (u) at (\h , \v ) [label=above:$u$]{};
\node [vertex] (v) at (\h, \v-5) [label=below:$v$]{};
\draw [dashed] (u) to [bend left = 60] node (uv) {} (v);
\draw [dashed] (u) to [bend left = -60]   (v);
\node [texto] (I) at (\h, \v-2 ) [label=below:$I(u  v)$]{};

\node[dashed] [set] (Y) at (\h+3 , \v-2.5 ) {$Y$};
\node[dashed] [set] (X) at (\h+5.5 , \v-2.5 ) {$X$};

\draw (X) to (Y);
\draw (Y) to (uv);

\end{tikzpicture}

\end{center}
\caption{Sets $I(u,v)$, $Y$ and $X$.}
\label{fig1}
\end{figure}

We claim that $y \in N(u) \cup N(v)$. Supposing the contrary, let $y' \in I(u,v)$ be such that $yy' \in E(G)$ and let $W$ be a weakly toll $(u,v)$-walk containing $y'$.
We can see $W$ as the concatenation of two subwalks $W_u$ and $W_v$, where $W_u$ is a $(u,y')$-walk and $W_v$ is a $(v,y')$-walk. Since $W_u y W_v$ is a weakly toll $(u,v)$-walk containing $y$, we have a contradiction proving that the claim does hold.
	
We know that no vertex of $Y$ belongs to $N(u) \cap N(v)$, because this was true for some vertex $y$, then $uyv$ would be a weakly toll $(u,v)$-walk containing $y$ and $y$ would belong to $I(u,v)$. Hence, we can write $Y = Y_u \cup Y_v$ such that $Y_u \cap Y_v = \emptyset$ where $Y_u \subseteq N(u)$ and $Y_v \subseteq N(v)$.

Now, we show that $X = \varnothing$. Then, suppose that there is $x \in X$. If $Y_u \cup \{u\}$ separates $x$ from $v$, then, by the above claim, there is an $(x,u)$-path contains exactly one vertex of $Y_u$ and none of $N[v]$. Such path can be used to show that $I(u,v) \subset I(x,v)$, which is not possible by the choice of $\{u,v\}$. Therefore, by symmetry, there are paths $P_u$ and $P_v$ from $x$ to $u$ and from $x$ to $v$, respectively, such that $P_u$ contains exactly one vertex of $Y_u$ and none of $N[v]$, while $P_v$ contains exactly one vertex of $Y_v$ and none of $N[u]$. Notice that these two paths form a weakly toll $(u,v)$-walk containing $x$, which is not possible. Hence, $X = \varnothing$ and $V(G) \setminus I(u,v) = Y$. Write $Z_u = Y_u \setminus T[u]$. See Figure~\ref{fig2}.

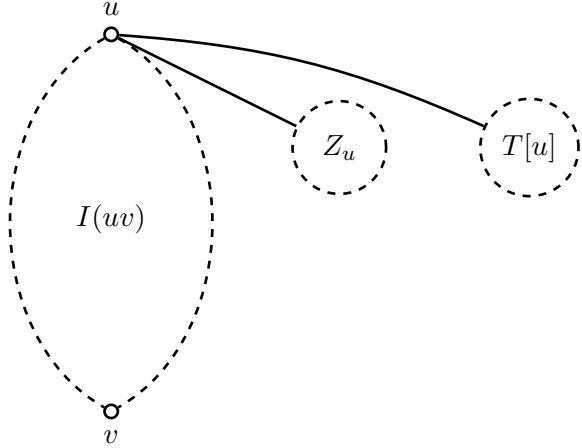
\begin{figure}[ht]
\begin{center}

\begin{tikzpicture}[scale=1]

\pgfsetlinewidth{1pt}

\tikzset{vertex/.style={circle,  draw, minimum size=5pt, inner sep=0pt}}
\tikzset{texto/.style={circle,  minimum size=5pt, inner sep=0pt}}
\tikzset{set/.style={circle,  draw, minimum size=35pt, inner sep=5pt}}

\def\h{0}
\def\v{0}

\node [vertex] (u) at (\h , \v ) [label=above:$u$]{};
\node [vertex] (v) at (\h, \v-5) [label=below:$v$]{};
\draw [dashed] (u) to [bend left = 60] node (uv) {} (v);
\draw [dashed] (u) to [bend left = -60]   (v);
\node [texto] (I) at (\h, \v-2 ) [label=below:$I(u  v)$]{};

\node[dashed] [set] (Zu) at (\h+3 , \v-1.5 ) {$Z_u$};
\node[dashed] [set] (Tu) at (\h+5.5 , \v-1.5 ) {$T[u]$};
\draw (u) to (Zu);
\draw (u) to [bend left = 10] (Tu);

\end{tikzpicture}

\end{center}
\caption{Sets $I(u,v)$, $Z_u$ and $T[u]$.}
\label{fig2}
\end{figure}

We claim that every $u' \in Z_u$ has a neighbor in $I(u,v) \setminus N[u]$. Suppose by contradiction that $u'$ has no neighbors in $I(u,v) \setminus N[u]$. We will show that $I(u',v)$ has more vertices than $I(u,v)$. It suffices to show that for every $w \in I(u,v)$, there is a weakly toll $(u',v)$-walk containing $w$.
Let $W$ be a weakly toll $(u,v)$-walk containing $w$, let $u^*$ be the only neighbor of $u$ in $W$, and denote by $W'$ the walk obtained from $W$ by deleting $w$. If $u'u^* \in E(G)$, then $u'W'$ is a weakly toll $(u',v)$-walk containing $w$. If $u'u^* \not\in E(G)$, then $u'W$ is a weakly toll $(u',v)$-walk containing $w$. Since $u' \in I(u',v) \setminus I(u,v)$, we have that $I(u',v)$ has more vertices than $I(u,v)$, which is a contradiction and the claim does hold.

Denote by $R_u$ the subset of $I(u,v) \setminus N[u]$ formed by the vertices having neighbors in $Z_u$. By the above claim, $R_u \ne \varnothing$. We have that $Q_u = N(u) \cap I(u,v)$ separates the vertices of $R_u$ from $v$, because otherwise some vertex of $Z_u$ would belong to $I(u,v)$. Now, let $r \in R_u$. See Figure~\ref{fig3}.

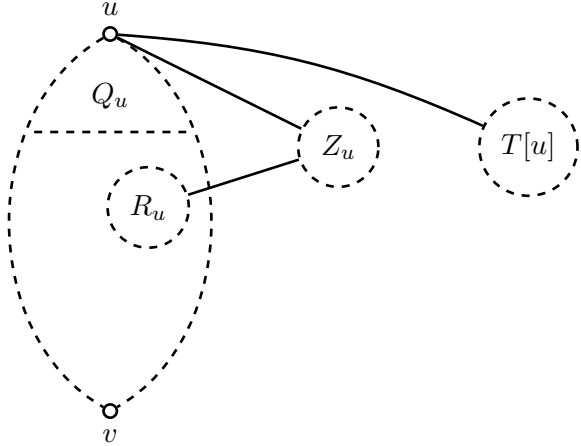
\begin{figure}[ht]
\begin{center}

\begin{tikzpicture}[scale=1]

\pgfsetlinewidth{1pt}

\tikzset{vertex/.style={circle,  draw, minimum size=5pt, inner sep=0pt}}
\tikzset{texto/.style={circle,  minimum size=5pt, inner sep=0pt}}
\tikzset{set/.style={circle,  draw, minimum size=30pt, inner sep=5pt}}

\def\h{0}
\def\v{0}

\node [vertex] (u) at (\h , \v ) [label=above:$u$]{};
\node [vertex] (v) at (\h, \v-5) [label=below:$v$]{};
\draw [dashed] (u) to [bend left = 60] node (uv) {} (v);
\draw [dashed] (u) to [bend left = -60]   (v);

\node[dashed] [set] (Zu) at (\h+3 , \v-1.5 ) {$Z_u$};
\node[dashed] [set] (Tu) at (\h+5.5 , \v-1.5 ) {$T[u]$};
\draw (u) to (Zu);
\draw (u) to [bend left = 10] (Tu);

\node[dashed] [set] (Ru) at (\h+0.5 , \v-2.3 ) {$R_u$};
\draw (Zu) to (Ru);

\draw [dashed] (-1,-1.3) to (1,-1.3);
\node [texto] (I) at (\h, \v-0.4 ) [label=below:$Q_u$]{};

\end{tikzpicture}

\end{center}
\caption{Sets $Q_u$, $R_u$, $Z_u$ and $T[u]$.}
\label{fig3}
\end{figure}

We claim that $Z_u \subset I(r,u,v)$. Let $z \in Z_u$. For the case where $rz \in E(G)$, it holds that $z \in I(r,u)$. Then, we can assume that $rz \not\in E(G)$. Let $G'$ be the subgraph of $G$ induced by $I(u,v)$. Let $P_r$ be a minimum path of $G'$ from $r$ to $Q_u$. Denote by $q$ the only vertex of $P_r$ belonging to $Q_u$. If $qv \in E(G)$, then let $P_v = vq$. Otherwise, let $P_v$ be a minimum path of $G'$ from $v$ to $Q_u$. Paths $P_r$ and $P_v$ exist because $G'$ is connected. Let $q'$ be the only vertex of $P_v$ belonging to $Q_u$, which can be equal to $q$. Furthermore, since $Q_u$ separates $R_u$ from $v$, the intersection of $P_r$ and $P_v$ has at most one vertex. Now, the walk $P_ruzuP_v$ is a weakly toll $(r,v)$-walk containing $z$, completing the proof of this claim.

Finally, define $R_v$ relative to $v$ as $R_u$ has been defined to $u$, and note that every vertex of $G$ belongs to exactly one of the sets $I(u,v), T(u), T(v), Z_u$ and $Z_v$. We consider three cases depending on whether the vertices of $T(u)$ and $T(v)$ are weakly toll extreme or not.

The first possibility is when no vertex of $T[u] \cup T[v]$ is extreme. We will show eight vertices, not necessarily different, such that every vertex of $G$ belongs to the interval of a subset of these vertices. Since the vertices of $T[u] \cup T[v]$ are not extreme, there are two vertices $w_1,w_2$ such that $T[u] \subset I(w_1,w_2)$, and there are two vertices $w_3,w_4$ such that $T[v] \subset I(w_3,w_4)$. We know that there is $r_u \in R_u$ such that $Z_u \subset I(u,v,r_u)$ and that there is $r_v \in R_v$ such that $Z_v \subset I(u,v,r_v)$. Therefore, $\{u,v, w_1, \ldots, w_4, r_u,r_v\}$ is a weakly toll interval set of $G$ with at most 8 vertices. Therefore,~$(\ref{ite:noextreme})$ does hold.
	
The second case is when exactly one of the sets $T[u]$ and $T[v]$ is formed by weakly toll extreme vertices. By symmetry, we can assume that $T[u] \subseteq ext(G)$ and that $T[v] \cap ext(G) = \varnothing$. Using the vertices of the previous case, we can conclude that $T[u] \cup \{v, w_1, w_2, r_u, r_v\}$ is a weakly toll interval set of $G$ with at least $|T[u]| + 1$ vertices and at most $|T[u]| + 5$ vertices, which means that~$(\ref{ite:1extreme})$ holds.
	
In the last case, i.e., when both $T[u]$ and $T[v]$ are formed by weakly toll extreme vertices, we have that $T[u] \cup T[v] \cup \{r_u, r_v\}$ is an interval set of $G$ with at least $|T[u] \cup T[v]|$ vertices and at most $|T[u] \cup T[v]| + 2$ vertices. Hence,~$(\ref{ite:2extremes})$ does hold.

\bigskip \noindent $(\ref{ite:2classes})$ Consider that partition $I(u,v), T(u), T(v), Z_u$ and $Z_v$ of $V(G)$ constructed above. We have shown that $Z_u \subset I(u,v,r_u)$ and $Z_v \subset I(u,v,r_v)$, which means that the vertices of $(I(u,v) \cup Z_u \cup Z_v) \setminus \{u,v\}$ are not weakly toll extreme vertices of $G$. Therefore, $ext(G) \subseteq T[u] \cup T[v]$.
\end{proof}

The following result is a consequence of Theorem~\ref{thm:wtn}~$(\ref{ite:2classes})$.

\begin{corollary} \label{cor:atmost2}
For any graph $G$, at most $2$ classes of twins of $G$ are formed by weakly toll extreme vertices.
\end{corollary}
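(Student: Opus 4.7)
The plan is a short deduction from Theorem~\ref{thm:wtn}~$(\ref{ite:2classes})$, together with a separate treatment of the complete case.

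First, I would dispose of the complete-graph case. If $G$ is complete, then every pair of distinct vertices is adjacent, so no weakly toll walk exists and $I(S)=S$ for every $S\subseteq V(G)$. In particular every subset is weakly toll convex, every vertex is a weakly toll extreme vertex, and $V(G)$ consists of a single class of twins. So there is exactly one class of twins formed by extreme vertices, which is at most two.

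Next, I would assume $G$ is non-complete and connected, which is precisely the setting of Theorem~\ref{thm:wtn}. Choose $u,v\in V(G)$ maximizing $|I(u,v)|$; part $(\ref{ite:2classes})$ of the theorem then yields ${\mathit ext}(G)\subseteq T[u]\cup T[v]$. Because the classes of twins partition $V(G)$ and $T[u]$ and $T[v]$ are themselves classes of this partition (possibly equal), any twin class $T'$ other than $T[u]$ and $T[v]$ is disjoint from $T[u]\cup T[v]$, and hence contains no weakly toll extreme vertex at all. Therefore only the two classes $T[u]$ and $T[v]$ can possibly be formed entirely by weakly toll extreme vertices, which is exactly the claim of the corollary.

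The argument involves no computation and no new idea beyond recognizing that the containment ${\mathit ext}(G)\subseteq T[u]\cup T[v]$ furnished by Theorem~\ref{thm:wtn}~$(\ref{ite:2classes})$, combined with the fact that the twin classes partition $V(G)$, immediately bounds by two the number of twin classes meeting ${\mathit ext}(G)$. I do not foresee a substantive obstacle; the only point worth making explicit is that $T[u]$ and $T[v]$ are full twin classes (not merely subsets of some larger class), so that the union $T[u]\cup T[v]$ is the union of at most two twin classes, not more.
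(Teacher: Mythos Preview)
Your proposal is correct and takes essentially the same approach as the paper: the paper simply declares the corollary to be a consequence of Theorem~\ref{thm:wtn}~$(\ref{ite:2classes})$ without further argument, and you spell out precisely that deduction. Your explicit treatment of the complete-graph case (where Theorem~\ref{thm:wtn} does not apply) is a small but appropriate addition that the paper omits.
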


Theorem~\ref{thm:wtn} leads to the following algorithm for finding a minimum weakly toll interval set of a general graph $G$.

\begin{itemize}

\item It begins determining the classes of twins $T_1, \ldots, T_k$ of $G$ that are toll extreme vertices. 
Corollary~$(\ref{cor:atmost2})$, we know that $k \le 2$.

\item If $k = 0$, then, by Theorem~\ref{thm:wtn}~$(\ref{ite:noextreme})$, $wtn(G) \le 8$. Then, a minimum weakly toll interval set of $G$ can be computed in $O(n^8 \alpha)$, where $\alpha$ is the time complexity for computing the weakly toll interval of a given set.

\item If $k = 1$, then Theorem~\ref{thm:wtn}~$(\ref{ite:1extreme})$ guarantees that $|T[u]| + 1 \le wtn(G) \le |T[u]| + 5$ where $T[u]$ is the only class of twins of $G$ formed by weakly toll extreme vertices. Then, a minimum weakly toll interval set of $G$ can be computed in $O(n^5 \alpha)$.

\item If $k = 2$, then, using Theorem~\ref{thm:wtn}~$(\ref{ite:2extremes})$, we have that $|T[u] \cup T[v]| \le wtn(G) \le |T[u] \cup T[v]| + 2$, where $T[u]$ and $T[v]$ are the only classes of twins of $G$ formed by weakly toll extreme vertices. In this case, a minimum weakly toll interval set of $G$ can be computed in $O(n^2 \alpha)$.

\end{itemize}

\begin{corollary} \label{cor:wtn}
Given a general graph $G$ of order $n$, size $m$ and maximum degree $\Delta$, a minimum weakly toll interval set $S$ of $G$ can be computed in $O(n^9\Delta^2 (n+m))$ steps. Furthermore, $|\hat{S}| \le 8$.
\end{corollary}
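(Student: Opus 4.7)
The plan is to execute the algorithmic outline given just before the corollary and then to verify both the running-time bound and the structural bound $|\hat{S}|\le 8$. I would begin by computing the classes of twins $T_1,\dots,T_p$ of $G$ in linear time via modular decomposition, and then determine which of these classes consist of weakly toll extreme vertices. Because $N[u]=N[u']$ whenever $u,u'$ are twins, a short symmetry argument shows that either every vertex of $T[u]$ is in $ext(G)$ or none is, so it suffices to test one representative per class; this test amounts to checking whether $V(G)\setminus\{u_i\}$ is weakly toll convex, i.e., one evaluation of the operator $I$, and its cost is covered by Corollary~\ref{cor:complexityIH}. By Corollary~\ref{cor:atmost2} the classification yields $k\in\{0,1,2\}$ extreme classes.

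Next I would branch on $k$ and in each case use Theorem~\ref{thm:wtn} to enumerate a polynomial family of candidate sets that is guaranteed to contain a minimum weakly toll interval set. In case $k=0$, item~$(\ref{ite:noextreme})$ gives $wtn(G)\le 8$, so I would brute-force all $O(n^{8})$ sets of size at most $8$, compute $I(\cdot)$ for each, and return a smallest one whose interval equals $V(G)$. In case $k=1$ with unique extreme class $T[u]$, item~$(\ref{ite:1extreme})$ gives $wtn(G)\le|T[u]|+5$; since $T[u]\subseteq ext(G)$ forces $T[u]\subseteq S^{*}$ for every interval set $S^{*}$, I would fix $T[u]$ and enumerate the $O(n^{5})$ ways to add up to five further vertices. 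In case $k=2$, item~$(\ref{ite:2extremes})$ gives $wtn(G)\le|T[u]\cup T[v]|+2$, and analogously I would fix $T[u]\cup T[v]$ and enumerate the $O(n^{2})$ choices for up to two extra vertices.

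For the running time, a single evaluation $I(S)$ costs $O(\Delta^{2}|S|^{2}(n-|S|)(n+m))$ by Corollary~\ref{cor:complexityIH}. In case $k=0$, $|S|$ is a constant so each test costs $O(\Delta^{2}n(n+m))$ and, with $O(n^{8})$ candidates, the total is $O(n^{9}\Delta^{2}(n+m))$. Cases $k=1$ and $k=2$ may involve large sets but far fewer candidates; a routine arithmetic check gives $O(n^{8}\Delta^{2}(n+m))$ and $O(n^{5}\Delta^{2}(n+m))$ respectively, both dominated by the $k=0$ bound.

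Finally, for $|\hat{S}|\le 8$: in case $k=0$ the returned $S$ has at most $8$ vertices, so at most $8$ twin classes are represented; in case $k=1$ the returned set lies in $T[u]\cup S'$ with $|S'|\le 5$, giving $|\hat{S}|\le 6$; in case $k=2$ one similarly obtains $|\hat{S}|\le 4$. So the worst case is $k=0$, yielding the claimed bound of $8$. The main obstacle I anticipate is mild: it is to argue that the enumerated family truly contains a minimum interval set, which follows by combining the upper bounds of Theorem~\ref{thm:wtn} with the observation that every extreme twin class must lie in every interval set, together with the matching lower bounds of items~$(\ref{ite:1extreme})$ and~$(\ref{ite:2extremes})$ to certify that the minimum is attained within the enumeration.
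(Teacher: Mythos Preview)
Your proposal is correct and follows essentially the same approach as the paper: both execute the algorithm described immediately before the corollary, branch on the number $k$ of extreme twin classes, and bound the total work case by case. The only minor variation is in the bookkeeping for cases $k=1$ and $k=2$: the paper invokes the identity $I(S)=S\cup I(\hat S)$ so that every interval evaluation involves at most eight representatives, whereas you bound the cost of $I(S)$ directly via Corollary~\ref{cor:complexityIH} and exploit the smaller number of candidates; both routes yield the stated $O(n^{9}\Delta^{2}(n+m))$ bound.
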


\begin{proof}
The time complexity, given in Corollary~\ref{cor:complexityIH}, for computing the weakly toll interval of a set $S$ is $O(\Delta^2 |S|^2 (n - |S|)(n+m))$. Note that $I(S) = S \cup I(\hat{S})$.
Note also that for any set $S$ considered in the above algorithm, $|\hat{S}| \le 8$, resulting in a total time complexity $O(n^9\Delta^2 (n+m))$.
\end{proof}

\section{Weakly toll hull number}

We remark that the polynomial-time algorithm presented in Section~\ref{sec:interval} for finding a minimum weakly toll interval set $S$ of a graph $G$ of order $n$ and size $m$ can be adapted for finding a minimum weakly toll hull set $R$ of $G$ in polynomial time. As observed in Corollary~\ref{cor:wtn}, $|\hat{S}| \le 8$. Since $wth(G') \le wtn(G')$ for any graph $G'$, we conclude that $|\hat{R}| \le 8$. Therefore, using Corollary~\ref{cor:complexityIH}, one can compute $wth(G)$ in $O(n^{12} \Delta^2 (n+m))$ steps. In this section, we present an algorithm with time complexity $O(n^2\Delta^2 (n+m))$ for computing ${\mathit wth}(G)$. The case where $G$ is a prime graph follows from the following known result and the fact that every induced path is a weakly toll walk.

\begin{lemma}{\em \cite{dourado-et-al}} \label{the:mhp}
	If $G$ is prime that is not complete, then every pair $u,v$ of non-adjacent vertices of $G$ is a monophonic hull set of $G$.
\end{lemma}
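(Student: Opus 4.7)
The plan is to argue by contradiction. Assume (as implicitly needed for the statement to hold) that $G$ is connected, and suppose the monophonic hull $H$ of $\{u,v\}$ is a proper subset of $V(G)$; I aim to extract from $H$ a clique separator of $G$, contradicting primality.

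Since $G$ is connected and $H$ is proper, there is a connected component $C$ of $G - H$, and the set $K := N(V(C)) \cap H$ is non-empty. The heart of the argument is to show that $K$ is a clique. Suppose toward contradiction that $a, b \in K$ are non-adjacent in $G$. Each of $a, b$ has a neighbor in $V(C)$, and $C$ is connected, so $G[V(C) \cup \{a,b\}]$ contains an $(a,b)$-path; pick one of minimum length, say $P = a\,x_1\,x_2\,\cdots\,x_k\,b$. Since $ab \notin E(G)$, $k \ge 1$. A chord of $P$ in $G$ would permit shortening, so $P$ is induced; its interior vertices $x_1,\ldots,x_k$ all lie in $V(C)$. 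Monophonic convexity of $H$ applied to $a, b \in H$ now forces $x_1,\ldots,x_k \in H$, contradicting $V(C) \cap H = \varnothing$. Hence $K$ is a clique.

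To finish, observe that $K$ is a clique and $u, v \in H$ are non-adjacent, so at most one of $u, v$ can belong to $K$; in particular $H \setminus K \ne \varnothing$. Choose $x \in V(C)$ and $y \in H \setminus K$. Any $(x,y)$-path in $G$ must leave $V(C)$, and every vertex of $V(G) \setminus V(C)$ adjacent to $V(C)$ lies in $K$; thus such a path must meet $K$. This shows that $K$ is a clique separator of $G$, contradicting the primality of $G$. We conclude $H = V(G)$, as required.

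The delicate step is establishing that $K$ is a clique, which is exactly where the monophonic structure is used: turning two non-adjacent vertices of $K$ into an induced $(a,b)$-path through $C$, via a shortest-path argument inside the auxiliary subgraph $G[V(C) \cup \{a,b\}]$. The remaining portion is essentially unpacking the definition of a clique separator, which becomes routine once the clique property of $K$ is in place.
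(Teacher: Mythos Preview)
The paper does not supply its own proof of this lemma; it is quoted from \cite{dourado-et-al} as a known result, so there is nothing in the present paper to compare against. Your argument is correct and is essentially the classical proof from the monophonic convexity literature: take a component $C$ of $G-H$, show that its neighbourhood $K$ inside $H$ is a clique (otherwise a shortest $(a,b)$-path through $C$ is induced and forces interior vertices into $H$), and then observe that $K$ separates $V(C)$ from $H\setminus K\ni\{u,v\}\setminus K$, contradicting primality. One small remark: with the paper's definition of separator (requiring $u,v$ to be connected in $G$), primality does not automatically force connectedness, so your explicit assumption that $G$ is connected is indeed needed; in the cited source and in the way the lemma is used here this is implicit.
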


\begin{corollary}\label{cor:prime-not-clique-1}
	If $G$ is a prime graph that is not complete, then every pair of non-adjacent vertices of $G$ is a weakly toll hull set of $G$.
\end{corollary}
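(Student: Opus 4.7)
The plan is to leverage Lemma~\ref{the:mhp} together with the observation (already noted in the excerpt) that every induced path is a weakly toll walk. Given a prime, non-complete graph $G$ and a pair $u,v$ of non-adjacent vertices, Lemma~\ref{the:mhp} gives that $\{u,v\}$ is a monophonic hull set of $G$. My goal is to upgrade this to a weakly toll hull set.

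The key step is to verify that the monophonic interval operator $I_m$ is dominated pointwise by the weakly toll interval operator $I$: for every $S\subseteq V(G)$, $I_m(S)\subseteq I(S)$. To see this, let $P=u_0u_1\cdots u_k$ be an induced $(u_0,u_k)$-path with $u_0u_k\notin E(G)$. Since $P$ is induced, the only neighbor of $u_0$ on $P$ is $u_1$, and the only neighbor of $u_k$ on $P$ is $u_{k-1}$, so the two conditions defining a weakly toll walk are trivially satisfied. Hence every vertex captured by $I_m(S)$ also lies on some weakly toll walk between two vertices of $S$, and therefore belongs to $I(S)$.

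Now I would run the standard monotonicity iteration. Setting $S_0=\{u,v\}$ and $S_{i+1}=I(S_i)$, while $T_0=\{u,v\}$ and $T_{i+1}=I_m(T_i)$, I would show by induction that $T_i\subseteq S_i$: the base case is trivial, and for the inductive step, since $I_m$ is monotone and $I_m\subseteq I$, we have $T_{i+1}=I_m(T_i)\subseteq I_m(S_i)\subseteq I(S_i)=S_{i+1}$. Because $\{u,v\}$ is a monophonic hull set, the ascending chain $(T_i)$ stabilizes at $V(G)$, so the same holds for $(S_i)$. Therefore $H(\{u,v\})=V(G)$, completing the proof.

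I do not expect any real obstacle here: the only content beyond Lemma~\ref{the:mhp} is the routine verification that induced paths are weakly toll walks and the monotonicity of the interval operators, both of which are immediate from the definitions. The proof should be short, essentially a single paragraph.
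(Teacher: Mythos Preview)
Your proposal is correct and matches the paper's own argument, which is simply the sentence preceding Lemma~\ref{the:mhp}: the corollary follows from Lemma~\ref{the:mhp} together with the fact that every induced path is a weakly toll walk. Your explicit verification that $I_m(S)\subseteq I(S)$ and the iteration of the interval operators just spell out this one-line remark.
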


For the case where $G$ is not a prime graph, we need of some auxiliary results.

\begin{lemma} {\em \cite{leimer1993}}  \label{lem:clique}
	Let $M_i$ and $M_j$ be two mp-subgraphs of a graph $G$. If $V(M_i)\cap V(M_j)$ is not empty, then it is a clique.
\end{lemma}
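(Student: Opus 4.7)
The plan is to argue by contradiction. Assume $V(M_i)\cap V(M_j)\neq\varnothing$ but is not a clique, so it contains two non-adjacent vertices $u,v$. Consider the induced subgraph $H=G[V(M_i)\cup V(M_j)]$. Since $M_i$ and $M_j$ are distinct maximal prime subgraphs, neither is contained in the other, so $V(H)\supsetneq V(M_i)$ and $V(H)\supsetneq V(M_j)$. By maximality of $M_i$ (or of $M_j$) as a prime induced subgraph, $H$ itself cannot be prime; hence $H$ contains a clique separator $C$.

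The key step is to push this clique separator down to one of $M_i$ or $M_j$, contradicting that they are prime. Let $C_i=C\cap V(M_i)$ and $C_j=C\cap V(M_j)$; both are cliques, being subsets of $C$. I would show that $C_i$ is a separator of $M_i$ or $C_j$ is a separator of $M_j$. Suppose not: then the subgraphs $M_i-C_i$ and $M_j-C_j$ are connected, so every vertex of $V(M_i)\setminus C_i$ lies in a single connected component $D_i$ of $H-C$, and every vertex of $V(M_j)\setminus C_j$ lies in a single component $D_j$. Since $V(M_i)\cap V(M_j)$ is not a clique while $C$ is, the intersection cannot be contained in $C$; pick any $w\in (V(M_i)\cap V(M_j))\setminus C$. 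Then $w\in D_i\cap D_j$, which forces $D_i=D_j$. But then $V(H)\setminus C=(V(M_i)\cup V(M_j))\setminus C\subseteq D_i$, so $H-C$ is connected, contradicting that $C$ separates $H$.

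Thus $M_i$ or $M_j$ admits a clique separator (namely $C_i$ or $C_j$), contradicting the assumption that both are prime. This contradiction rules out the existence of non-adjacent vertices $u,v$ in $V(M_i)\cap V(M_j)$, proving that the intersection is a clique.

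The main obstacle is the connectivity bookkeeping in the middle step: one must be careful that $V(M_i)\cap V(M_j)$ really contains a vertex outside $C$, which is precisely where the hypothesis that the intersection is not a clique is used (any clique sits inside some maximal clique, but here we only need that a non-clique set cannot fit inside the clique $C$). Everything else is a routine reduction of a clique separator of the union $H$ to a clique separator of one of its prime pieces, together with the observation that maximality of $M_i$ and $M_j$ forbids $H$ itself from being prime.
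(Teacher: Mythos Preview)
The paper does not prove this lemma at all; it is simply quoted from Leimer~\cite{leimer1993} without proof, so there is no in-paper argument to compare against. Your proof is correct and is essentially the standard argument for this fact: force a clique separator in $H=G[V(M_i)\cup V(M_j)]$ by maximality, then push it down to one of the two prime pieces. The vertices $u,v$ you introduce at the start are never actually used---the only place the non-clique hypothesis enters is in producing the vertex $w\in (V(M_i)\cap V(M_j))\setminus C$, which you handle correctly.
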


\begin{lemma}\label{lem:prime-not-clique-2}
Let $G$ be a reducible graph and let $M$ be an extremal mp-subgraph of $G$. If $\exclusive{M}$ is not a clique, then there exist non-adjacent vertices $u_0,u_1\in \exclusive{M}$, a vertex $w_0\in \shared{M}$, and an induced path $P$ such that either $P=u_0 w_0 u_1$ or $P=u_0 w_0 w_1 u_1$ for some $w_1\in \shared{M}$.
\end{lemma}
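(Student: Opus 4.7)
The plan is to prove this by contradiction. I will assume that no such induced path $P$ exists and derive that $M$ admits a clique separator, contradicting its primality. Two standing facts will be used throughout: by Lemma~\ref{lem:clique}, $\shared{M}$ is a clique, and it is non-empty since $G$ is connected and reducible with $M$ extremal; moreover, every vertex of $\exclusive{M}$ has all its neighbors inside $V(M)$, because any edge incident to a vertex of $\exclusive{M}$ extends to a maximal clique contained in the unique mp-subgraph through that vertex, namely $M$. Consequently, the whole argument stays inside $M$.

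First I would translate the non-existence hypothesis into neighborhood conditions. For any non-adjacent pair $u_0, u_1 \in \exclusive{M}$, the absence of a length-$2$ witness forces $N(u_0) \cap N(u_1) \cap \shared{M} = \varnothing$. For the length-$3$ case, since $\shared{M}$ is a clique the middle edge of $u_0\, w_0\, w_1\, u_1$ is automatic; so if both $N(u_0) \cap \shared{M}$ and $N(u_1) \cap \shared{M}$ were non-empty, then any $w_0$ and $w_1$ picked from these two sets would be distinct (by the disjointness just obtained) and would satisfy the required non-adjacencies, yielding the forbidden induced path. Hence at least one of $N(u_0) \cap \shared{M}$, $N(u_1) \cap \shared{M}$ must be empty.

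Next I would exhibit the clique separator. Partition $\exclusive{M}$ into $A = \{u \in \exclusive{M} : N(u) \cap \shared{M} \neq \varnothing\}$ and $B = \exclusive{M} \setminus A$. The preceding step says that two non-adjacent vertices of $\exclusive{M}$ cannot both lie in $A$, so $A$ is a clique. Since $\exclusive{M}$ is not a clique, $B \neq \varnothing$; since $M$ is connected with $\shared{M}$ and $\exclusive{M}$ both non-empty, some edge joins them and hence $A \neq \varnothing$. By definition $B$ has no neighbor in $\shared{M}$, and since vertices of $\exclusive{M}$ have no neighbors outside $V(M) = \shared{M} \cup A \cup B$, any path in $M$ from $B$ to $\shared{M}$ must pass through $A$. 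Thus $A$ is a clique separator of $M$, contradicting the primality of $M$.

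The main obstacle I foresee is the length-$3$ analysis: one must carefully combine the automatic edge $w_0 w_1$ (granted by the clique structure of $\shared{M}$) with the disjointness coming from the length-$2$ failure to conclude that failure of both cases forces one of the two slices $N(u_i) \cap \shared{M}$ to be empty. Once this observation is set up correctly, identifying $A$ as the desired clique separator and invoking the primality of $M$ proceeds smoothly.
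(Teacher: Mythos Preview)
Your proof is correct and is essentially the contrapositive of the paper's argument: the paper defines the same set $S=\{u\in \exclusive{M}: N(u)\cap \shared{M}\neq\varnothing\}$ (your $A$), shows directly that $S$ is not a clique by invoking the primality of $M$ (if $\exclusive{M}\setminus S\neq\varnothing$ then $S$ would be a clique separator), and then picks non-adjacent $u_0,u_1\in S$ to build the desired induced path. You run the same separator-versus-primality idea in reverse, assuming no path exists to force $A$ to be a clique and then a clique separator; the underlying content is identical.
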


\begin{proof}
	Let $S=\{u\in V(M_i)\setminus C_i : u \ \text{has at least one neighbor in} \ C_i\}$. If $(V(M_i)\setminus C_i)\setminus S=\emptyset$, then, since $V(M_i)\setminus C_i$ is not a clique, $S$ is not a clique.
	If $(V(M_i)\setminus C_i)\setminus S\neq\emptyset$, then $S$ separates some vertex $x\in (V(M_i)\setminus C_i)\setminus S$ from some vertex $y\in C_i$. Thus, $S$ is not a clique because $M_i$ contains no clique separator. From the above arguments, there exist nonadjacent vertices $u_0,u_1\in S$. If $u_0,u_1$ have a common neighbor $w_0\in C_i$, then the lemma follows. Otherwise, let $w_j\in C_i$ be a neighbor of $u_j$, for $j\in\{0,1\}$, and consider the induced path $P=u_0 w_0 w_1 u_1$.
\end{proof}

\begin{lemma} {\em \cite{leimer1993}} \label{lem:extremalmp}
	Every reducible graph has at least two extremal mp-sub\-graphs.
\end{lemma}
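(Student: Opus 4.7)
The plan is to organize the mp-subgraphs of $G$ into a tree whose leaves are exactly the extremal mp-subgraphs, and then invoke the elementary fact that every tree on at least two vertices has at least two leaves.

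First, I would establish the existence of an \emph{mp-tree} for $G$: a tree $T$ whose vertex set is the collection $\{M_1,\ldots,M_p\}$ of mp-subgraphs of $G$, with the Helly-type property that for every $v \in V(G)$, the set $\{M_i : v \in V(M_i)\}$ induces a connected subtree of $T$. The standard way to produce such a $T$ is by induction on $|V(G)|$: pick any minimal clique separator $C$ of $G$, split $G$ along $C$ into subgraphs $G_1$ and $G_2$, apply induction to obtain mp-trees $T_1$ and $T_2$, and glue them by adding an edge joining two mp-subgraphs (one in each $T_s$) that both contain $C$. From the Helly property one immediately obtains the useful consequence that for any two mp-subgraphs $M_i, M_j$ and every $M_k$ lying on the unique $M_i$--$M_j$ path in $T$, $V(M_i) \cap V(M_j) \subseteq V(M_k)$, since every vertex in $V(M_i) \cap V(M_j)$ belongs to a connected subtree containing $M_i$ and $M_j$, and hence $M_k$.

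Next, I would deduce that every leaf of $T$ is extremal. If $M_i$ is a leaf with unique neighbor $M_j$, then for every other mp-subgraph $M_k \ne M_i$ the path from $M_i$ to $M_k$ in $T$ passes through $M_j$, so the previous consequence yields $V(M_i) \cap V(M_k) \subseteq V(M_j)$, and therefore $V(M_i) \cap V(M_k) \subseteq V(M_i) \cap V(M_j)$. This is precisely the definition of $M_i$ being extremal with witness $M_j$. Since $G$ is reducible, any clique separator $C$ splits $G-C$ into at least two components, and no prime subgraph can straddle $C$ (its restriction would inherit $C$ as a clique separator), so at least two distinct mp-subgraphs are needed to cover $V(G)$. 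Hence $T$ has at least two vertices and therefore at least two leaves, producing at least two extremal mp-subgraphs.

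The main obstacle is the first step, the construction of the mp-tree, and even more basically the verification that the mp-subgraphs of $G$ form a well-defined finite family to which the Helly property can be attached by a tree. To do this rigorously one would invoke Leimer's analysis of the clique-separator decomposition, using Lemma~\ref{lem:clique} to ensure that the clique-intersection property is preserved by the recursive splitting and that the mp-subgraphs produced on each side of a split are in bijective correspondence with the mp-subgraphs of $G$ lying on that side.
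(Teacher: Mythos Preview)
The paper does not supply its own proof of this lemma; it simply quotes the statement from Leimer~\cite{leimer1993}. Your proposal reconstructs exactly the argument underlying Leimer's decomposition theory: build the mp-tree (what Leimer calls the decomposition tree of maximal prime subgraphs), observe that the Helly/running-intersection property forces every leaf to satisfy the definition of an extremal mp-subgraph, and use reducibility to guarantee at least two nodes and hence at least two leaves. So your route is not merely compatible with the paper---it \emph{is} the cited source's approach, and it is correct.

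Two minor tightenings worth making if you write this out in full. First, in the step ``no prime subgraph can straddle $C$'': what you mean is that if an induced prime subgraph $H$ met two components of $G-C$, then $C\cap V(H)$ would be a clique separator of $H$ (nonempty because $H$ is connected), contradicting primeness; phrase it that way rather than speaking of a ``restriction inheriting'' $C$. Second, your final paragraph correctly flags that the real work is establishing the mp-tree with the Helly property; this is the content of Leimer's Theorem (and the accompanying bijection between mp-subgraphs of $G$ and those of the pieces after splitting along a minimal clique separator), so in a self-contained write-up you would either reproduce that inductive argument in detail or cite it explicitly, as the present paper does.
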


\begin{lemma} \label{lem:2MP}
Let $M_1$ and $M_2$ be distinct extremal mp-subgraphs of a graph $G$.
If $u_1 \in \exclusive{M_1}$ and $u_2 \in \exclusive{M_2}$, then $V(G) \setminus (\exclusive{M_1} \cup \exclusive{M_2}) \subset H(u_1,u_2)$.
\end{lemma}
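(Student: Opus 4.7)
The plan is to build $H(u_1,u_2)$ in two main stages: first capture the extremal clique separators $\shared{M_1}$ and $\shared{M_2}$, and then propagate outward through the remaining mp-subgraphs. As a preliminary, every edge of $G$ is contained in some mp-subgraph, so from $u_i\in\exclusive{M_i}$ one gets $N(u_1)\subseteq V(M_1)\setminus\{u_1\}$, $N(u_2)\subseteq V(M_2)\setminus\{u_2\}$, and in particular $u_1u_2\notin E(G)$; by extremality together with Lemma~\ref{lem:clique}, each $\shared{M_i}$ is a clique that separates $\exclusive{M_i}$ from $V(G)\setminus V(M_i)$.

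For the first stage I would show $\shared{M_1}\cup\shared{M_2}\subseteq I(u_1,u_2)$. Let $P$ be a shortest $(u_1,u_2)$-path of $G$; since $P$ is induced, it is itself a weakly toll $(u_1,u_2)$-walk, and it necessarily meets $\shared{M_1}$ at some vertex $s$. For an arbitrary $w\in\shared{M_1}$, I would modify $P$ into a weakly toll walk containing $w$: if $u_1w\in E(G)$, replace the prefix $u_1\ldots s$ of $P$ by the walk $u_1\,w\,s$ (using $ws\in E(G)$, which holds since $\shared{M_1}$ is a clique); otherwise insert the detour $s\,w\,s$ at $s$. Weak tollness at $u_1$ follows from $N(u_1)\subseteq V(M_1)$ and the shortest-path choice of $P$, which prevents any further vertex of the walk from being adjacent to $u_1$; the argument at $u_2$ is identical, and $\shared{M_2}$ is handled symmetrically.

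For the second stage I would propagate outward along the decomposition tree of mp-subgraphs. For each mp-subgraph $M\notin\{M_1,M_2\}$, I would prove $V(M)\subseteq H(u_1,u_2)$ by induction on its distance in this tree from $\{M_1,M_2\}$. The inductive step first pulls $\shared{M}$ into the hull using the same clique-detour trick as above (each separator $V(M)\cap V(M')$ is again a clique by Lemma~\ref{lem:clique}). To capture $\exclusive{M}$ I split into subcases: if $M$ is not complete, Corollary~\ref{cor:prime-not-clique-1} applied inside the prime subgraph $M$ shows that any non-adjacent pair $a,b\in V(M)$ generates $V(M)$ as its weakly toll hull computed within $M$, and since weakly toll walks inside $M$ remain weakly toll in $G$, it suffices to exhibit such a pair already in $H(u_1,u_2)\cap V(M)$, which one obtains from $\shared{M}$ (possibly after a local detour). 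If $M$ is complete, then each $v\in\exclusive{M}$ is adjacent to every vertex of $\shared{M}$, and since $v\notin V(M_1)\cup V(M_2)$ one has $u_1v,u_2v\notin E(G)$, so splicing the detour $w\,v\,w$ at any $w\in\shared{M}\subseteq H(u_1,u_2)$ into an existing weakly toll $(u_1,u_2)$-walk produces a weakly toll walk through $v$.

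The main obstacle is verifying the weak tollness of every modified walk, that is, making sure no inserted vertex becomes adjacent to $u_1$ or $u_2$. The shortest-path choice in the first stage is crucial, since it pins down exactly one neighbor of $u_1$ (and of $u_2$) on the initial walk; in the second stage the fact that interior detour vertices lie outside $V(M_1)\cup V(M_2)$ makes the endpoint-adjacency constraints automatic. A secondary subtlety is finding, for each non-complete $M$ in the second stage, two non-adjacent vertices of $V(M)$ already placed in the hull before invoking Corollary~\ref{cor:prime-not-clique-1}; the clique-detour mechanism combined with the mp-subgraph tree structure should deliver such a pair at each step.
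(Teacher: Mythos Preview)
Your two-stage plan is a reasonable alternative to the paper's direct per-vertex argument, but both stages have gaps that are not merely cosmetic. In Stage~1, the walk you build when $u_1w\in E(G)$ need not be weakly toll: if the shortest path $P$ enters $\shared{M_1}$ immediately (so $s$ is the second vertex of $P$ and hence $u_1s\in E(G)$), then $u_1\,w\,s\,\ldots$ has $u_1$ adjacent to two distinct walk vertices $w$ and $s$. Concretely, take $V(G)=\{u_1,a,b,c,u_2\}$ with edges $u_1a,\,u_1b,\,ab,\,ac,\,bc,\,cu_2$; here $M_1=\{u_1,a,b\}$, $\shared{M_1}=\{a,b\}$, a shortest path is $P=u_1\,a\,c\,u_2$ with $s=a$, and for $w=b$ your walk $u_1\,b\,a\,c\,u_2$ fails at $u_1$. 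The ``otherwise'' detour $s\,w\,s$ has the symmetric defect at the other end: nothing prevents $w\in\shared{M_1}$ from also lying in $V(M_2)$ and being adjacent to $u_2$.

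In Stage~2 the splicing step is likewise unjustified: $w\in H(u_1,u_2)$ does not hand you a weakly toll $(u_1,u_2)$-walk through $w$; it only gives a weakly toll $(a,b)$-walk for some earlier hull members $a,b$, and the spliced vertex $v$ may well be adjacent to $a$ or $b$. Your plan to invoke Corollary~\ref{cor:prime-not-clique-1} for a non-complete $M$ also stalls when $\shared{M}$ is a clique (as for any third extremal mp-subgraph), since you then have no non-adjacent pair of $V(M)$ already in the hull. The paper avoids all of this by arguing vertex-by-vertex: for each target $v$ it picks a $(u_1,v)$-path and a $(u_2,v)$-path, each crossing its separator exactly once and minimising intersection with the other separator, and concatenates them; a short extra argument handles the residual case where one path is forced to meet $\shared{M_2}$ twice.
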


\begin{proof}
Let $v \in V(G) \setminus (\exclusive{M_1} \cup \exclusive{M_2})$.
Let $P_1$ be a $(u_1,v)$-path containing only one vertex $v_1$ of $\shared{M_1}$ and containing the minimum number of vertices of $\shared{M_2}$.
Let $P_2$ be a $(u_2,v)$-path containing only one vertex $v_2$ of $\shared{M_2}$ and containing the minimum number of vertices of $\shared{M_1}$.
If $P_1$ has exactly one vertex of $\shared{M_2}$, we can assume that such vertex is $v_2$. Analogously,
if $P_2$ has exactly one vertex of $\shared{M_1}$, we can assume that such vertex is $v_1$.
Therefore, if $P_1$ has at most one vertex of $\shared{M_2}$ and $P_2$ has at most one vertex of $\shared{M_1}$, the concatenation of $P_1$ and $P_2$ is a weakly toll $(u_1,u_2)$-walk containing $v$.

Hence, since $\shared{M_2}$ is a clique, we can assume, without loss of generality, that $P_1$ has exactly two vertices of $v_2,v'_2$ of $\shared{M_2}$. Assume that $v_2$ is between $u_1$ and $v'_2$ in $P_1$. Let $M'_2$ be an extremal mp-subgraph of $G$ such that $\shared{M_2} \subseteq V(M'_2)$. By the choice of $P_1$, we have that $v'_2 \notin V(M_1)$, which means that $M'_2 \ne M_1$. 

We claim that $v_1$ does not belong to $V(M'_2)$. Then, suppose by contradiction that $v_1 \in V(M'_2)$.
Since $M'_2$ is an mp-subgraph, we know that $v_2$ does not separate $v_1$ from $v'_2$.
Then, there is a $(v_1,v'_2)$-path $P'_1$ in $M'_2 - v_2$. By the choice of $P_1$, we have that $P'_1$ has also two vertices of $\shared{M_2}$, $v'_2$ and a vertex $v''_2$.
Analogously, $\{v_2,v''_2\}$ does not separate $v_1$ from $v'_2$. However, this implies that $\shared{M_2}$ is an infinite set which is a contradiction, which means that the claim does hold.

Let $w_1$ be a vertex of $V(M'_2) \setminus \shared{M_2}$ that is neighbor of $v_2$ in $G$, and let $W$ be a closed $(w_1,w_1)$-walk of $M'_2 \setminus \shared{M_2}$ containing all vertices of $V(M'_2) \setminus \shared{M_2}$.
Using $W$, $P_1(u_1,v_2)$ and a $(u_2,v_2)$-path of $M_2$ containing only one vertex of $\shared{M_2}$, we conclude that $V(M'_2) \subset H(u_1,u_2)$.

Let $w_2$ be a neighbor of $v'_2$ in $G$ that belongs to $M'_2 - \shared{M_2}$.
Finally, the walk formed by $w_2v'_2$ and $P_2$ is a weakly toll containing $v$, completing the proof.
\end{proof}

\begin{theorem}\label{thm:wth}
	The following hold for a graph $G$ of order $n$.

	\begin{enumerate}[$(1)$]
		\item ${\mathit wth}(G)=n$ if $G$ is a complete graph. \label{ite:complete}
	
		\item ${\mathit wth}(G) = 2$ if
		
		\begin{enumerate}[$(a)$]
		\item If $G$ is a non-complete prime graph, or. \label{ite:prime}
		
		\item $G$ contains at least three extremal mp-subgraphs, or. \label{ite:3}		
		
		\item $G$ is reducible and $M_i$ is an extremal mp-subgraph of $G$ such that $\exclusive{M_i}$ is not a clique. \label{ite:notclique}
		\end{enumerate}

		\item $wth(G) \in \{2, x_1 + 1, x_2 + 1, x_1 + x_2\}$, where $x_i = |\exclusive{M_i}|$ for $i\in\{1,2\}$ if $G$ contains exactly two extremal mp-subgraphs $M_1$ and $M_2$. \label{ite:exactly2}		
	\end{enumerate}
\end{theorem}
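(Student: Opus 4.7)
Part (1) is immediate: in a complete graph no two vertices are non-adjacent, so no weakly toll walk exists (the definition requires $u_0 u_k \notin E(G)$). Hence $I$ and $H$ act as the identity on subsets, giving $wth(G) = n$. Part (2a) is just Corollary~\ref{cor:prime-not-clique-1}. For (2b) I would pick $u_i \in \exclusive{M_i}$ for $i=1,2,3$: a single application of Lemma~\ref{lem:2MP} to $(M_1,M_2)$ gives $V(G)\setminus(\exclusive{M_1}\cup\exclusive{M_2})\subseteq H(u_1,u_2)$, which already contains $u_3$; by monotonicity I can then apply Lemma~\ref{lem:2MP} also to $(u_1,u_3)$ and $(u_2,u_3)$ inside $H(u_1,u_2)$, and the three output complements together cover $V(G)$ because the three sets $\exclusive{M_i}$ are pairwise disjoint.

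Case (2c) is the first nontrivial step. I use Lemma~\ref{lem:prime-not-clique-2} to produce non-adjacent $u_0,u_1\in\exclusive{M_i}$ together with an induced path of length $2$ or $3$ through $\shared{M_i}$. Since $M_i$ is non-complete prime and induced in $G$, every weakly toll $M_i$-walk is also weakly toll in $G$, so Corollary~\ref{cor:prime-not-clique-1} applied inside $M_i$ yields $V(M_i)\subseteq H(u_0,u_1)$. To reach the vertices outside $V(M_i)$ I exploit that $N(u_0),N(u_1)\subseteq V(M_i)$: any walk of the form $u_0,w_0,P,w_*,u_1$ whose middle piece $P$ stays inside $V(G)\setminus(V(M_i)\setminus\shared{M_i})$ automatically satisfies the two weakly toll conditions at the endpoints, because no vertex of $P$ outside $V(M_i)$ is adjacent to $u_0$ or $u_1$. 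Primality of each mp-subgraph adjacent to $M_i$, together with Lemma~\ref{lem:clique}, rules out $\shared{M_i}$ (minus a single vertex) separating this region from itself, so $P$ can be chosen to visit any prescribed vertex outside $V(M_i)$; iterating along the mp-subgraph tree then shows $H(u_0,u_1)=V(G)$.

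For part (3) I would assume exactly two extremal mp-subgraphs $M_1,M_2$ with both $\exclusive{M_i}$ cliques, since otherwise (2c) already gives $wth(G)=2\in\{2,x_1+1,x_2+1,x_1+x_2\}$. For the upper bound I construct a hull set of size $x_1+x_2$ by taking $\exclusive{M_1}\cup\exclusive{M_2}$ and using a propagation argument analogous to (2c) to pull in $\shared{M_1}$ and the interior of each internal mp-subgraph; smaller hull sets of sizes $x_1+1$, $x_2+1$, or $2$ arise in subcases where a single gateway vertex in $\shared{M_i}$, or a twin relation, already drags an entire exclusive clique into the hull. For the lower bound I identify the weakly toll extreme vertices: each $v\in\exclusive{M_i}$ is extreme because $\exclusive{M_i}$ being a clique forces $v$ to behave simplicially and prevents it from appearing strictly inside any weakly toll walk between other vertices, so every hull set must hit each extreme twin class in $\exclusive{M_1}\cup\exclusive{M_2}$. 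I expect the main obstacle to lie exactly here: pinning $wth(G)$ to one of the four listed values and ruling out every integer strictly between consecutive candidates demands a careful dichotomy on twin classes and on which gateway vertices can replace a full exclusive clique; the detour construction in (2c) also requires cautiously choosing $P$ to avoid any vertex of $V(M_i)\setminus\{w_0,w_*\}$ that might be adjacent to $u_0$ or $u_1$.
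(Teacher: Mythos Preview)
Parts~(1), (2a) and (2b) are fine and match the paper's argument.

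For (2c) the obstacle you flag at the end is real, and your proposed walk $u_0,w_0,P,w_*,u_1$ with $P$ merely avoiding $\exclusive{M_i}$ does not avoid it: $P$ may pass through vertices of $\shared{M_i}$ adjacent to $u_0$ or $u_1$, destroying the weakly toll condition. The paper sidesteps this entirely by taking the middle portion inside a connected component $G'$ of $G-V(M_i)$ (not $G-\exclusive{M_i}$): since $u_0,u_1\in\exclusive{M_i}$ have all their neighbours in $V(M_i)$, a closed spanning walk $W_v$ of $G'$ yields $u_0\,w_0\,W_v\,w_0\,u_1$ (or $u_0\,w_0\,W_v\,w_0\,w_1\,u_1$) with nothing further to check. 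The remaining components of $G-V(M_i)$ are then absorbed in one shot each, using endpoints $b_1\in\exclusive{M_i}$ and $b_2\in V(M_j)\setminus\shared{M_i}$ that are already known to lie in $H(u_0,u_1)$; no iteration along an mp-tree is needed.

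Part~(3) contains a genuine gap. Your lower-bound claim that ``each $v\in\exclusive{M_i}$ is extreme because $\exclusive{M_i}$ being a clique forces $v$ to behave simplicially'' is false: $v$ also has neighbours in $\shared{M_i}$, and those need not be pairwise adjacent. Concretely, if $M_i$ is prime but \emph{not} complete while $\exclusive{M_i}$ is still a clique, then for any $v\in V(M_i)$ one can find a non-adjacent pair in $V(M_i)\setminus\{v\}$ (otherwise $V(M_i)\setminus\{u,v\}$ would be a clique separator of $M_i$), and Lemma~\ref{the:mhp} then puts $v$ in their hull; so no vertex of $M_i$ is extreme. The correct dichotomy is therefore not ``is $\exclusive{M_i}$ a clique?'' but ``is $M_i$ complete?''. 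When $M_i$ is complete, $\exclusive{M_i}=T[u_i]$ is a class of true twins and every member is extreme; when $M_i$ is not complete, one chooses $u_i\in\exclusive{M_i}$ with a non-neighbour in $\shared{M_i}\subset H(u_1,u_2)$, which drags all of $\exclusive{M_i}$ into $H(u_1,u_2)$. The paper runs the four cases (both/one/neither of $u_1,u_2$ extreme) and obtains $wth(G)=x_1+x_2$, $x_1+1$, $x_2+1$, or $2$ respectively. Without this completeness dichotomy your argument cannot pin $wth(G)$ to the four listed values, and your stated lower bound would in fact overcount in the non-complete cases.
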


\begin{proof}
\noindent $(\ref{ite:complete})$ In a complete graph, no vertex can be an internal vertex of a weakly toll walk. Thus, ${\mathit wth}(G)=n$.
	
\bigskip\noindent $(\ref{ite:prime})$ It follows from Corollary~\ref{cor:prime-not-clique-1}.

\bigskip\noindent $(\ref{ite:3})$
Let $M_1, M_2$ and $M_3$ be three extremal mp-subgraphs of $G$. Let $u_i \in \exclusive{M_i}$ for $i\in\{1,2,3\}$.
By Lemma~\ref{lem:2MP}, we have that $V(G) \setminus (\exclusive{M_i} \cup \exclusive{M_j}) \subset H(u_i,u_j)$ for any distinct $i,j \in \{1,2,3\}$.
Since $\exclusive{M_k} \subset V(G) \setminus (\exclusive{M_i} \cup \exclusive{M_j})$, for $\{i,j,k\} = \{1,2,3\}$, we have that $\{u_1,u_2\}$ is a weakly toll hull set of $G$.
	
\bigskip\noindent $(\ref{ite:notclique})$ By Lemma~\ref{lem:prime-not-clique-2}, there exist non-adjacent vertices $u_0,u_1 \in \exclusive{M_i}$, $w_0 \in \shared{M_i}$, and an induced path $P$ such that either $P=u_0w_0u_1$ or $P=u_0w_0w_1u_1$ for some $w_1\in \shared{M_i}$. By Corollary~\ref{cor:prime-not-clique-1}, $V(M_i)\subseteq H(u_0,u_1)$. Now, let $G'$ be the connected component of $G-V(M_i)$ containing $V(M_j)\setminus \shared{M_i}$, 	where $M_j \ne M_i$ is an mp-subgraph of $G$ containing $\shared{M_i}$. The definition of mp-subgraph implies that $w_0$ has a neighbor $v \in (V(M_j)\setminus \shared{M_i})\subseteq V(G')$. In addition, there exists a closed walk $W_v$ in $G'$ starting and ending at $v$ such that $W_v$ contains all vertices of $G'$. Hence, either $u_0w_0W_vw_0u_1$ or $u_0w_0W_vw_0w_1u_1$ is a weakly toll walk containing all vertices of $G'$, i.e., $V(G')\subseteq I(u_0,u_1) \subseteq H(u_0,u_1)$.
	
Next, let $G''$ be another connected component of $G-V(M_i)$ distinct from $G'$ containing vertices of an mp-subgraph $M_k$ of $G$ such that $C'_i=V(M_i)\cap V(M_k)\neq\emptyset$, and let $w\in C'_i$. Note that $w$ has neighbors $b_1\in \exclusive{M_i}$, $b_2 \in V(M_j) \setminus \shared{M_i}$ and $y\in (V(M_k)\setminus C'_i)\subseteq V(G'')$. Also, there exists a closed walk $W_y$ in $G''$ starting and ending at $y$ such that $W_y$ contains all vertices of $G''$, implying that $b_1wW_ywb_2$ is a weakly toll walk containing all vertices of $G''$, i.e., $V(G'')\subseteq I(b_1,b_2) \subseteq H(u_0,u_1)$ because we know that $b_1,b_2 \in H(u_0,u_1)$. In conclusion, $\{u_0,u_1\}$ is a weakly toll hull set of $G$ and ${\mathit wth}(G)=2$.

\bigskip\noindent $(\ref{ite:exactly2})$
By $(\ref{ite:notclique})$, we can assume that $\exclusive{M_1}$ and $\exclusive{M_2}$ are cliques.
For $i \in \{1,2\}$, if $M_i$ is a complete graph, choose $u_i$ as any vertex of $\exclusive{M_i}$; otherwise, choose $u_i$ as a vertex of $\exclusive{M_i}$ having a non-neighbor in $\shared{M_i}$.
By Lemma~\ref{lem:2MP}, $V(G) \setminus (\exclusive{M_1} \cup \exclusive{M_2}) \subset H(u_1,u_2)$.

Note that if $M_1$ is not a complete graph, i.e., there is a pair $u,v \in V(M_1)$ such that $uv \not\in E(G)$, then there is other pair $u',v' \in V(M_1)$ such that $u'v' \not\in E(G)$ and $\{u,v\} \cap \{u,v'\} = \varnothing$. Indeed this hold because if this was not the case, then $V(M_1) \setminus \{u,v\}$ would be a clique separating $u$ from $v$, which contradicts the assumption that $M_1$ is an mp-subgraph. Note that this fact implies that if $M_1$ is not a complete graph, then no vertex of $M_i$ is an extreme vertex.

We have four cases to consider. In the first one, $u_1$ and $u_2$ are extreme vertices of $G$. By the above parapraph, we have that $M_1$ and $M_2$ are complete sets. Therefore, for $i \in \{1,2\}$, it holds $T[u_i] = \exclusive{M_i}$. Since $u_1$ and $u_2$ are extreme vertices, then $\exclusive{M_1} \cup \exclusive{M_2}$ is a minimum weakly toll hull set of $G$ and has size $x_1 + x_2$.

In the second case, $u_1$ is an extreme vertex but $u_2$ is not.
Then, there are vertices $v_1,v_2 \in V(G)$ such that $u_2 \in I(v_1,v_2)$.
If $M_2$ is a complete graph, then $v_1, v_2 \not\in V(M_2)$.
Since $M_1$ is a complete graph, at most one of $v_1$ and $v_2$ belongs to $V(M_1)$. If one of $v_1$ and $v_2$ belongs to $V(M_1)$, then we can assume that such vertex is $u_1$. 
Since $V(G) \setminus (\exclusive{M_1} \cup \exclusive{M_2}) \subset H(u_1,u_2)$, we have that $\exclusive{M_2} \subset H(u_1,u_2)$. Since all vertices of $\exclusive{M_1}$ are true twins, we have that the minimum weakly toll hull set of $G$ has cardinality $|\exclusive{M_1}| + 1 = x_1 + 1$.
If $M_2$ is not a complete graph, then $u_2$ has a non-neighbor in $\shared{M_2}$. Since $\shared{M_2} \subset H(u_1,u_2)$, we have that $\exclusive{M_1} \cup \{u_2\}$ is a weakly toll hull set of $G$. It is minimum because the assumption that $u_1$ is an extreme vertex implies that all vertices of $\exclusive{M_1}$ are extreme vertices as well.
 
The third case, $u_2$ is an extreme vertex but $u_1$ is not, is analogue to the second case and we conclude that $wth(G) = x_2+1$.

In the last case, both $u_1$ and $u_2$ are not extreme vertices of $G$.
If $M_1$ is not a complete graph, then $u_1$ has a non-neighbor in $\shared{M_1}$.
Then, the fact that $V(G) \setminus (\exclusive{M_1} \cup \exclusive{M_2}) \subset H(u_1,u_2)$ implies that $\exclusive{M_1} \subset H(u_1,u_2)$. Since the same does hold for $M_2$, it remains to consider that at least one of $M_1$ and $M_2$ is a complete graph.
Then, consider that $M_1$ is a complete graph.
Recall that there are vertices $v_1,v_2 \in V(G)$ such that $u_1 \in I(v_1,v_2)$.
Since $M_1$ is a complete graph, $v_1,v_2 \notin V(M_1)$.
Since $\exclusive{M_2}$ is a clique, at most one of $v_1$ and $v_2$ belongs to $\exclusive{M_2}$.
If one of $v_1$ and $v_2$ belongs to $\exclusive{M_2}$, then we can assume that such vertex is $u_2$.
Since $V(G) \setminus (\exclusive{M_1} \cup \exclusive{M_2}) \subset H(u_1,u_2)$, we have that $\{u_1,u_2\}$ is a weakly toll hull set of $G$.	
\end{proof}

\begin{corollary}
Given a general graph $G$ of order $n$, size $m$ and maximum degree $\Delta$, ${\mathit wth}(G)$ can be computed in $O(n^2\Delta^2 (n+m))$ steps.
\end{corollary}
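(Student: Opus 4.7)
The plan is to translate Theorem \ref{thm:wth} into a polynomial-time procedure by implementing each case as an efficient test and verifying that every test fits within the stated bound.

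First, check in $O(n+m)$ time whether $G$ is complete; if so, return $n$ by case~(\ref{ite:complete}). Otherwise, compute the decomposition of $G$ into mp-subgraphs and identify the extremal ones, a standard polynomial-time task~\cite{leimer1993}. If $G$ is prime, if $G$ contains at least three extremal mp-subgraphs, or if some extremal mp-subgraph $M_i$ satisfies that $\exclusive{M_i}$ is not a clique, then part~(\ref{ite:prime}), (\ref{ite:3}), or~(\ref{ite:notclique}) of Theorem \ref{thm:wth} yields $wth(G)=2$. Each of these conditions can be decided by scanning the mp-decomposition, well within the target bound.

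The remaining case is that $G$ has exactly two extremal mp-subgraphs $M_1,M_2$ with both $\exclusive{M_1}$ and $\exclusive{M_2}$ cliques. By Theorem \ref{thm:wth}(\ref{ite:exactly2}), $wth(G) \in \{2,\, x_1+1,\, x_2+1,\, x_1+x_2\}$, and the case analysis inside the proof of that part shows that the correct value is determined solely by whether the representatives $u_1\in\exclusive{M_1}$ and $u_2\in\exclusive{M_2}$ (selected as in the theorem: any vertex of $\exclusive{M_i}$ when $M_i$ is complete, otherwise a vertex of $\exclusive{M_i}$ with a non-neighbor in $\shared{M_i}$) are weakly toll extreme vertices of $G$. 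Specifically, the answer is $x_1+x_2$ if both $u_1,u_2$ are extreme, $x_i+1$ if only $u_i$ is extreme, and $2$ if neither is. For each $u \in \{u_1,u_2\}$, I would decide whether $u$ is extreme by iterating over all $O(n^2)$ pairs $\{w,z\}$ with $wz\notin E(G)$ and $w,z\neq u$, using item~(\ref{prop2}) of the initial proposition to test in $O(\Delta^2(n+m))$ time whether $u$ lies on some weakly toll $(w,z)$-walk; $u$ is extreme iff no such pair exists. This produces a total cost of $O(n^2\Delta^2(n+m))$, which dominates the preceding steps and gives the claimed bound.

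The main obstacle is confirming that testing extremeness only for the two chosen representatives $u_1$ and $u_2$ suffices to pick the correct value among the four candidates. This rests on two structural observations from the proof of Theorem \ref{thm:wth}(\ref{ite:exactly2}): when $M_i$ is complete, $\exclusive{M_i}$ consists entirely of true twins, so either all or none of its vertices are extreme; and when $M_i$ is not complete, no vertex of $M_i$ can be extreme, so the specific choice of $u_i$ cannot be extreme either. Together these imply that the dichotomy \emph{``$u_i$ is extreme''} correctly encodes \emph{``every vertex of $\exclusive{M_i}$ is extreme''}, so the four subcases of Theorem \ref{thm:wth}(\ref{ite:exactly2}) are distinguished by testing extremeness of $u_1$ and $u_2$ alone, completing the plan.
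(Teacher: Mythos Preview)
Your proposal is correct and follows essentially the same approach as the paper: both reduce to the case analysis of Theorem~\ref{thm:wth}, handle the mp-decomposition and the easy cases within $O(nm)$, and identify the extremeness test in case~(\ref{ite:exactly2}) as the bottleneck at $O(n^2\Delta^2(n+m))$. Your write-up is more explicit than the paper's about why it suffices to test only the two representatives $u_1,u_2$ and about how the extremeness test is carried out, but the underlying argument is the same.
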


\begin{proof}
Items~$(\ref{ite:complete})$ and~$(\ref{ite:prime})$ of Theorem~\ref{thm:wth} consider the possibilites for a prime graph. Every graph that is not a prime graph falls in one of items~$(\ref{ite:notclique})$ to~$(\ref{ite:3})$ because every reducible graph has at least two extremal mp-subgraph by Lemma~\ref{lem:extremalmp}.
We recall that the set of mp-subgraphs of $G$ can be computed in $O(nm)$ steps~\cite{leimer1993}. Items~$(\ref{ite:notclique})$ and~$(\ref{ite:3})$ can be easily teste in $O(nm)$ steps.
In item~$(\ref{ite:exactly2})$, we have to test whether a vertex is extreme. Using Corollary~\ref{cor:complexityIH}, this can be done in~$O(n^2\Delta^2 (n+m))$ steps.	
\end{proof}

\section{Concluding remarks}

Theorem~\ref{thm:wtn}~$(\ref{ite:2classes})$ suggests that a set of weakly toll extreme vertices having no twins is an independent set. In the following result, we show that this fact holds on most of the well-studied graph convexities. Recall the following convexities:

\begin{itemize}

\item In the {\em $P_3$-convexity} a set $S$ is convex if for any two vertices $x,y \in S$, any common neighbor to $x$ and $y$ is also in $S$.

\item In the {\em induced $P_3$-convexity} a set $S$ is convex if for any two nonadjacent vertices $x,y \in S$, any common neighbor to $x$ and $y$ is also in $S$.

\item In the {\em triangle path convexity} a set $S$ is convex if for any two vertices $x,y \in S$, the vertices belonging to any $(x,y)$-path containing whose chords only form triangles also belong to $S$.

\item In the {\em $\Delta$-convexity} a set $S$ is convex if for any two adjacent vertices $x,y \in S$, any common neighbor to $x$ and $y$ is also in $S$.

\end{itemize}

\begin{proposition}
Let $G$ be a graph. In the geodesic, monophonic, $P_3$, induced $P_3$, triangle path, toll and weakly toll convexities, if $S \subseteq ext(G)$ such that $T(u) = \emptyset$ for every $u \in S$, then $S$ is an independent set.
\end{proposition}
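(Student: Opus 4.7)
The plan is to argue by contradiction and produce a single two-edge induced path that simultaneously witnesses non-extremality in every listed convexity. Suppose $u,v\in S$ are adjacent. Since $T(u)=\varnothing$ and $v\neq u$, the vertex $v$ is not a twin of $u$, so $N[u]\neq N[v]$. Up to interchanging $u$ and $v$ (their roles are symmetric, as both lie in $S\subseteq ext(G)$ with empty twin sets), we may pick $w\in N[u]\setminus N[v]$. A short verification that I would carry out first shows $w\neq u$ (otherwise $u\notin N[v]$ contradicts $uv\in E(G)$), hence $w\in N(u)$; and $w\neq v$ together with $vw\notin E(G)$, both following directly from $w\notin N[v]$. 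Thus $vuw$ is an induced $P_3$ of $G$ whose middle vertex is $u$.

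The heart of the proof is then to observe that, for each of the seven convexities, the existence of such an induced $P_3$ forces $u$ into the interval of $\{v,w\}$, contradicting the characterization of $u\in ext(G)$ as $V(G)\setminus\{u\}$ being convex. In the $P_3$ and induced $P_3$ convexities, $u$ is a common neighbor of the (nonadjacent) vertices $v$ and $w$, so $u$ lies in the interval directly. In the geodesic convexity, $d(v,w)=2$ since $vw\notin E(G)$, so $vuw$ is a shortest $(v,w)$-path through $u$. In the monophonic convexity, $vuw$ is an induced $(v,w)$-path. In the triangle path convexity, $vuw$ has no chord at all (its only nonadjacent pair of vertices is $\{v,w\}$), so the chord condition is vacuous. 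Finally, in the toll and weakly toll convexities, $vuw$ is a walk with a single internal vertex whose endpoints each have exactly one neighbor in the walk, so both endpoint conditions are trivially met. In every case $u$ belongs to the interval of $\{v,w\}\subseteq V(G)\setminus\{u\}$, so $V(G)\setminus\{u\}$ is not convex, contradicting $u\in ext(G)$.

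The only mildly delicate step is checking that the single witness $vuw$ really lies in each of the seven interval operators. I expect this to be the main obstacle only in terms of bookkeeping; because the walk has length two and non-adjacent endpoints, the more technical conditions (toll, weakly toll, triangle path) hold vacuously, so the unified two-edge argument goes through without a genuine case split.
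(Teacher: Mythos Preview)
Your argument is correct and follows essentially the same approach as the paper's proof: assume adjacent $u,v\in S$, use the twin-free hypothesis to find (up to symmetry) a vertex $w\in N[u]\setminus N[v]$, and observe that the induced path $vuw$ places $u$ in $I(v,w)$ for each listed convexity, contradicting $u\in ext(G)$. The paper's version is terser (it simply writes ``Note that $u\in I(u',v)$'' without verifying each convexity individually), whereas you spell out the routine case-check explicitly.
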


\begin{proof}
Suppose by contradiction that $u,v \in S$ and $uv \in E(G)$. Without loss of generality, we can say that there is $u' \in N[u] \setminus N[v]$. Note that $u \in I(u',v)$, which contradicts the assumption that $u$ is an extreme vertex.
\end{proof}

Finally, we remark that the above result does not always hold in the $m^3$-convexity and  the $\Delta$-convexity.


\end{document}